\documentclass[12pt,reqno]{amsart}
\usepackage{amssymb}
\usepackage{color}
\usepackage{amsthm}
\usepackage[pdftex]{graphicx}
\usepackage[margin=1in]{geometry}  % set the margins to 1in on all sides

\newcommand{\I}{\mathcal{I}}
\newcommand{\h}{\mathcal{H}}
\newcommand{\ebarl}{\mathcal{\bar{L}}}
\newcommand{\el}{\mathcal{{L}}}

\newcommand{\e}{\mathcal{E}}

\newcommand{\gafa}{\gamma_{\fa}}
\newcommand{\ga}{\gamma}

\newcommand{\bfJ}{\boldsymbol{J}}
\newcommand{\bfOm}{\boldsymbol{\Omega}}
\newcommand{\la}{\lambda}
\newcommand{\ka}{\kappa}

\newcommand{\fa}{\alpha}
\newcommand{\grad}{\nabla}
\newcommand{\bfb}{\boldsymbol{b}}
\newcommand{\bfu}{\boldsymbol{u}}
\newcommand{\bfU}{\boldsymbol{U}}

\newcommand{\bfx}{\boldsymbol{x}}

\numberwithin{equation}{section}

\newtheorem{theorem}{Theorem}[section]
\newtheorem{proposition}[theorem]{Proposition}
\newtheorem{lemma}[theorem]{Lemma}

\newtheorem{remark}[theorem]{Remark}
\newtheorem{definition}{Definition}[section]

\newcommand{\om}{\omega}
\newcommand{\R}{\mathbb{R}}

\begin{document}
%\singlespacing
%\onehalfspacing
%\doublespacing

\title[On a Generalized System with Applications to Ideal MHD]{On a Generalized System with Applications to Ideal Magnetohydrodynamics}

\author{Alejandro Sarria}
\address{Department of Mathematics, University of North Georgia, Dahlonega, GA 30597}
\email{alejandro.sarria@ung.edu}
%\author{UNG Student}
%\address{Department of Mathematics, University of North Georgia, Dahlonega, GA 30597}
%\email{Ung.Student@ung.edu}

%\thanks{}

\subjclass[2010]{35B44, 35B65, 35Q31, 35Q35}

\keywords{Blowup, MHD equations, Stagnation-point form, Global existence.}

\begin{abstract}
Finite-time blowup of solutions $(u(x,t), b(x,t))$ to a generalized system of equations with applications to ideal Magnetohydrodynamics (MHD) and one-dimensional fluid convection and stretching, among other areas, is investigated. The system is parameter-dependent, our spatial domain is the unit interval or the circle, and the initial data $(u_0(x),b_0(x))$ is assumed to be smooth. Among other results, we derive precise blowup criteria for specific values of the parameters by tracking the evolution of $u_x$ along Lagrangian trajectories that originate at a point $x_0$ at which $b_0(x)$ and $b_0'(x)$ vanish. We employ concavity arguments, energy estimates, and ODE comparison methods. We also show that for some values of the parameters, a non-vanishing $b_0'(x_0)$ suppresses finite-time blowup.      
\end{abstract}

\maketitle

\section{Introduction}
\label{intro}

\subsection{Background} We are concerned with finite-time blowup of solutions to the generalized system 
\begin{equation}
\label{gmhd}
\begin{split}
&u_{xt}+uu_{xx}=\lambda u_x^2-\lambda b_x^2+\ka bb_{xx}+I(t),
\\
&b_{t}+ub_{x}=\ka bu_{x},
\\
&u_t+uu_x=-p_x+bb_x,
\\
&I(t)=(\la+\ka)\|b_x(\cdot,t)\|_{L^2([0,1])}^2-(\la+1)\|u_x(\cdot,t)\|_{L^2([0,1])}^2
\end{split}
\end{equation}
for $0\leq x\leq 1$, smooth initial data $(u(x,0),b(x,0))=(u_0(x),b_0(x))$, parameters $\la, \ka\in\R$, and either the Dirichlet boundary condition
\begin{align}
\label{dbc}
u(0,t)=u(1,t)=0,\quad b(0,t)=b(1,t)=0,
\end{align}
or the periodic boundary condition
\begin{equation}
\begin{split}
\label{pbc}
&u(0,t)=u(1,t),\,\, u_x(0,t)=u_x(1,t),
\\
&b(0,t)=b(1,t),\,\, b_x(0,t)=b_x(1,t).
\end{split}
\end{equation}
Our motivation for studying \eqref{gmhd} is threefold. First, for $\lambda=\frac{1}{n-1},\, n\in\mathbb{Z}^+,\, n\geq 2$, and $\ka=1$, \eqref{gmhd} describes infinite energy solutions of the $n-$dimensional incompressible Magnetohydrodynamics (MHD) equations with zero dissipation
\begin{equation}
\label{inviscidMHD}
\begin{split}
&\bfu_t+\bfu\cdot\nabla\bfu=-\nabla p+\bfb\cdot\grad\bfb
\\
&\bfb_t+\bfu\cdot\grad\bfb=\bfb\cdot\grad\bfu
\\
&\grad\cdot\bfu=\grad\cdot\bfb=0, 
\end{split}
\end{equation}
obtained under a stagnation-point similitude velocity field and magnetic field   
\begin{equation}
\label{spf}
\begin{split}
\bfu(x,\bfx',t)=\left(u(x,t),-\frac{\bfx'}{n-1}u_x(x,t)\right),\quad \bfb(x,\bfx',t)=\left(b(x,t),-\frac{\bfx'}{n-1}b_x(x,t)\right)
\end{split}
\end{equation}
for $\bfx'=(x_2,x_3,\cdots, x_n)$. The term ``stagnation-point similitude'' arises from the observation that velocity fields of the form \eqref{spf}-i) emerge from the modeling of flow near a rear stagnation point (\cite{stuart}, \cite{ohkitani1}, \cite{gibbon}).

The ideal MHD equations \eqref{inviscidMHD} couples the incompressible Euler equations with an external magnetic field $\bfb$ for $p=P+\frac{1}{2}|\bfb|^2$ the scalar magnetic pressure, $P$ the hydrodynamic pressure and $\bfu$ the fluid velocity. Although the global regularity problem for the fully dissipative incompressible 2D MHD equations
\begin{equation}
\label{MHD}
\begin{split}
&\bfu_t+\bfu\cdot\nabla\bfu=-\nabla p+\bfb\cdot\grad\bfb+\nu\Delta\bfu,
\\
&\bfb_t+\bfu\cdot\grad\bfb=\bfb\cdot\grad\bfu+\mu\Delta\bfb,
\\
&\grad\cdot\bfu=\grad\cdot\bfb=0,
\end{split}
\end{equation}
has long been settled (see, e.g., \cite{Sermange}), it is not known if smooth solutions of the fully dissipative 3D MHD equations, or the 2D and 3D ideal MHD systems \eqref{inviscidMHD}, blowup in finite time. There is a large body of literature on the 2D and 3D MHD equations with full, partial, or zero dissipation. The reader may refer to the survey by Wu (\cite{Wu}) for a summary of relatively recent developments and additional references. 

Finite-time blowup of infinite energy solutions of the MHD equations, and related systems such as the incompressible Euler, inviscid Boussinesq, and incompressible Navier-Stokes equations, has been a particularly active area of research for the insights that the study of this particular class of solutions may provide into the global regularity problems involving their finite energy counterparts. So-called ``stretched'' infinite energy solutions  
\begin{equation}
\label{2andahalf}
\begin{split}
&\bfu(x,y,z,t)=(u_1(x,y,t),u_2(x,y,t),z\ga_1(x,y,t)+V(x,y,t))
\\
&\bfb(x,y,z,t)=(b_1(x,y,t),b_2(x,y,t),z\ga_2(x,y,t)+W(x,y,t))
\end{split}
\end{equation} 
of the 3D MHD equations have been investigated by several authors. In particular, solutions were shown to exist globally in time for $\ga_1=\ga_2\equiv0$ (\cite{chae0}). Moreover, and of particular interest in this paper, Gibbon and Ohkitani (\cite{gibbon3}) studied periodic solutions of the 3D ideal MHD equations \eqref{inviscidMHD} of the form \eqref{2andahalf} on an infinite tubular domain. Their numerical experiments suggest finite-time blowup in the velocity gradient and a simultaneous, yet late-stage and hard to detect, blowup in the magnetic field that was contingent upon the initial magnetic field $\bfb_0$ not vanishing at the spatial location(s) where blowup in the velocity gradient was found. If such vanishing was to occur, then there would be velocity gradient blowup but no singularity in the magnetic field. Additionally, Yan (\cite{Yan}) proved finite-time blowup of infinite energy, self-similar solutions in $\R^3$ for the 3D incompressible MHD equations \eqref{MHD} with smooth and non-smooth infinite energy initial data. In particular, the blowup was found to occur in the velocity gradient but not the magnetic field, and the blowup result extends to the 3D ideal MHD equations and the 3D Navier-Stokes equations. In contrast, global-in-time existence of stretched, infinite energy solutions of the 2D ideal MHD equations in $\R^2$ was established in \cite{Minkyu} for positive Elsasser initial data. With these results in mind, and noticing that the vanishing of $b(x,t)$ and $b_x(x,t)$ in \eqref{spf}-ii) at a point $x=x_0\in[0,1]$ implies the vanishing of the entire field $\bfb(x,\bfx',t)$ at $x=x_0$, one of the recurring themes in this work will be the role that a zero of $b(x,t)$ and/or $b_x(x,t)$ may play in the formation (or suppression) of a finite-time singularity in a solution of the generalized system \eqref{gmhd}. For additional results on the global regularity of infinite energy solutions of the incompressible MHD, Euler, inviscid Boussinesq, and Navier-Stokes equations, we direct the reader to \cite{gibbon1, gibbon4, constantin2, childress, Okamoto1, Sarria0, Sarria01, sarria1, sarria2}. 

Our second motivation for studying \eqref{gmhd} has its origin in the work \cite{ohkitani0} by Ohkitani and Okamoto where the authors argued that fluid convection can have a regularizing effect in the global existence problem for hydrodynamically relevant evolution equations (\cite{okamoto0, hou22}). In particular, if we allow for arbitrary parameter values $(\la,\ka)\in\R^2$, then \eqref{gmhd} may be used as a tool to study the interplay between one-dimensional fluid convection and stretching as follows. Noting that the induction equation \eqref{inviscidMHD}ii) can be written as $\bfb_t=\nabla\times(\bfu\times\bfb)$, we take the curl of \eqref{inviscidMHD}i)-ii) and use the standard vector identities
\begin{equation*}
\begin{split}
& \nabla\times(\nabla\times\bf{F})=\nabla(\nabla\cdot\bf{F})-\Delta\bf{F}
\\
& \nabla\cdot(\bf{F}\times\bf{G})=\bf{G}\cdot(\nabla\times\bf{F})-\bf{F}\cdot(\nabla\times\bf{G})
\\
&\nabla(\bf{F}\cdot\bf{G})=\bf{F}\cdot\nabla\bf{G}+\bf{G}\cdot\nabla\bf{F}+\bf{F}\times(\nabla\times\bf{G})+\bf{G}\times(\nabla\times\bf{F}),
\end{split}
\end{equation*}
along with \eqref{inviscidMHD}iii), to obtain the vorticity and current density formulation of the ideal 3D MHD equations, 
\begin{equation}
\label{3dmhd}
\begin{split}
&\bfOm_{t}+\underbrace{\bfu\cdot\nabla\bfOm}_{convection}=\underbrace{\bfOm\cdot\nabla\bfu}_{stretching}\,-\bfJ\cdot\nabla \bfb+\bfb\cdot\nabla\bfJ
\\
&\bfJ_{t}+\underbrace{\bfu\cdot\nabla \bfJ}_{convection}=\bfb\cdot\nabla\bfOm+\bfOm\cdot\nabla\bfb-\underbrace{\bfJ\cdot\nabla\bfu}_{stretching}+\,\bf{H}
\\
&\bfOm=\nabla\times\bfu,\,\, \bfJ=\nabla\times\bfb,
\end{split}
\end{equation} 
where $\bfOm$ is the vorticity, $\bfJ$ the current density, and $\bf{H}$ stands for the higher-order terms
$$\bf{H}=\bfb\times(\nabla\times\bfOm)+(\bfOm\times\bfJ)-\bfu\times(\nabla\times\bfJ)-(\bfJ\times\bfOm)-\Delta(\bfu\times\bfb).$$
Now, in 3D, \eqref{spf} gives   
\begin{align*}
&\nabla\times \bfu=\nabla\times\left(u,-\frac{y}{2}u_x,-\frac{z}{2}u_x\right)=(0,z/2,-y/2)u_{xx}(x,t),
\\
&\nabla\times \bfb=\nabla\times\left(b,-\frac{y}{2}b_x,-\frac{z}{2}b_x\right)=(0,z/2,-y/2)b_{xx}(x,t),
\end{align*}
from which it follows that the evolution of the fluid vorticity and current density associated with solutions of the form \eqref{spf} of the 3D ideal MHD equations \eqref{inviscidMHD} (and actually the 2D case as well) is dictated by the scalar functions $\omega=u_{xx}$ and $j=b_{xx}$. If we now differentiate \eqref{gmhd}-i)-ii) with respect to $x$, we obtain
\begin{equation}
\label{vorticityform}
\begin{split}
&\om_{t}+\underbrace{u\om_x}_{convection}=(2\la-1)\underbrace{\om u_x}_{stretching}-(2\la-\ka)jb_x+\ka bj_x
\\
&j_{t}+\underbrace{uj_x}_{convection}=\ka b\om_x+(2\ka-1)\om b_x-(2-\ka)\underbrace{ju_x}_{stretching}
\\
&\om=u_{xx},\,\, j=b_{xx},
\end{split}
\end{equation}
which may serve as a one-dimensional analogue of \eqref{3dmhd}. Thus, by varying the parameters $\la, \ka\in\R$, we may better understand the competing effects between one-dimensional fluid convection, stretching, and coupling in the formation of singularities (or their suppression). More particularly, the quadratic terms in \eqref{vorticityform} represent the competition in fluid convection between nonlinear steepening and amplification due to $(2\lambda-1)$-dimensional stretching and $2\la-\kappa$ and $\ka$-dimensional coupling (\cite{holm}), with the parameters $\lambda$ and $\kappa$ measuring the ratio of stretching to convection and the impact of the coupling between $u$ and $b$, respectively (\cite{wunsch0, wunsch1}).

Our third motivation for studying \eqref{gmhd} is that for particular values of the parameters $\la, \ka\in\R$, \eqref{gmhd} interpolates between several well-known models from fluid dynamics and mathematical physics in general. Indeed, in addition to describing solutions of the $n-$dimensional ideal MHD equations of the form \eqref{spf}, we also have that for $b(x,0)=b_0(x)\equiv0$, \eqref{gmhd} reduces to the generalized inviscid Proudman-Johnson equation (gPJ) \cite{Proudman1, Okamoto1, Wunsch11, Sarria0, Sarria01} comprising, among others, the Burgers' equation of gas dynamics for $\lambda=-1$, stagnation point-form solutions \eqref{spf}-i) of the $n-$dimensional incompressible Euler equations when $\lambda=\frac{1}{n-1}$ (\cite{childress, Saxton1}), and the Hunter-Saxton equation describing the orientation of waves in massive director field nematic liquid crystals for $\lambda=-\frac{1}{2}$ (\cite{Hunter1}). The gPJ equation also admits a geometric interpretation as a geodesic equation on the group of orientation-preserving diffeomorphisms of the circle (modulo rotations) (\cite{bauer}).

Moreover, for $(\la,\ka)=(-1/2,-1)$ and $b(x,t)=\rho(x,t)$ satisfying $$\rho(x,t)=\int_0^x(x-y)\rho(y,t)\,dy+\frac{1}{2}f(t)x^2+g(t)x+h(t)$$
for arbitrary functions $f, g,$ and $h$, the system \eqref{gmhd}-i)-ii) (with \eqref{gmhd}-i) differentiated once in space) becomes the Hunter-saxton (HS) system
\begin{equation}
\label{HS}
\begin{split}
&u_{xxt}+uu_{xxx}+2u_xu_{xx}+\rho\rho_x=0, 
\\
&\rho_t+u\rho_x=-\rho u_x.
\end{split}
\end{equation}
The HS system has applications in shallow-water theory and other areas. As a shallow-water model, $u$ and $\rho$ represent the horizontal velocity and density, respectively. The HS system arises as the ``short-wave'' limit of the Camassa-Holm (CH) system (\cite{Aconstantin0, Escher0}), which is in turn derived from the Green-Naghdi equations (\cite{Johnson1}), widely used in coastal oceanography to approximate the free-surface Euler equations. The HS system \eqref{HS} is also a particular case of the Gurevich-Zybin system describing the formation of large scale structure in the universe (\cite{Pavlov1}).

Lastly, for $\la=1$ and $\ka=3/2$, the trace of \eqref{gmhd}-i)-ii) with the Dirichlet boundary condition \eqref{dbc} describes stagnation-point form solutions
\begin{equation}
\label{bspf}
\begin{split}
\bfu(x,y,t)=(u(x,t),-yu_x(x,t)),\quad\theta(x,y,t)=yb(x,t)
\end{split}
\end{equation}
of the inviscid 2D Boussinesq equations
\begin{equation}
\label{boussinesq}
\begin{split}
&\bfu_t+\bfu\cdot\nabla\bfu=-\nabla p+\theta\,e_{2},
\\
&\theta_t+\bfu\cdot\nabla\theta=0,
\\
&\nabla\cdot\bfu=0,
\end{split}
\end{equation}
along the boundary $x\in\{0,1\}$ with Dirichlet boundary conditions. The Boussinesq equations model large scale atmospheric and oceanic flows responsible for cold fronts and the jet stream (see, e.g., \cite{gill}, \cite{majda}). Mathematically, the 2D Boussinesq equations serve as a lower-dimensional model of the 3D hydrodynamics equations and retain some key features of the 3D Euler equations, such as vortex stretching. In addition, it is well-known that (away from the axis of symmetry) the inviscid 2D Boussinesq equations are closely related to the Euler equations for 3D axisymmetric swirling flows (\cite{majdabertozzi}). 

\begin{remark}
Local well-posedness of \eqref{gmhd}-\eqref{dbc} (or \eqref{gmhd} with \eqref{pbc}) can be established via standard energy estimates by following an argument similar to that done in \cite{sarria2}. In particular, it can be shown that if $u_0\in H^2([0,1])$, $u_0'\in L^\infty([0,1])$, and $b_0\in H^2([0,1])$, then there exists a time $T=T(\|u_0\|_{H^2}, \|u_0'\|_{L^\infty}, \|b_0\|_{H^2})>0$
such that \eqref{gmhd} has a unique solution on $[0,T]$ satisfying $u\in C([0,T]; H^2)$,
$u_x\in C([0,T]; L^\infty)$ and $b\in C([0,T]; H^2)$.
Moreover, similar to the continuation criterion established in \cite{sarria2} and in the same spirit as the continuation criterion for ideal MHD solutions (see, e.g., \cite{Wu}), we have that if
\begin{equation} \label{Cricon}
\int_0^{T^*}\left( \|u_x(t)\|_{L^\infty}+\|b_x(t)\|_{L^\infty}\right)dt <+\infty,
\end{equation}
then the local solution can be extended to $[0,T^*]$. It is also worth noting that Kato's local existence theory (\cite{Kato}) can also be used to establish local well-posedness of \eqref{gmhd} in a manner similar as it was used in \cite{wunsch1, wunsch0} to establish local well-posedness of solutions to the generalized Hunter-Saxton system. Lastly, the reader may also refer to \cite{Kim} and \cite{Minkyu} for local well-posedness results for infinite-energy solutions of the type considered here in other Sobolev and Besov spaces.
\end{remark}

\subsection{Outline and Summary of Main Results}
The outline for the remainder of the paper is as follows. In section \ref{prelim}, we establish some preliminary results needed in later sections. In section \ref{sec:concavity}, we prove finite-time blowup of solutions of \eqref{gmhd} for $-1\leq \la<0$ and $\ka\leq-\la$ using a concavity argument on the Jacobian of the Lagrangian trajectories. In section \ref{sec:energy}, we further elaborate on the finite-time singularity for $(\la,\ka)\in\{-1/2\}\times(-\infty,1/2]$ using an energy conservation law. Even though these parameter values fall under those studied in section \ref{sec:concavity}, we include them here as we uncover additional regularity properties that the blowup solutions in section \ref{sec:concavity} do not necessarily possess. Then, in section \ref{sec:pje}, we show that for $\la\in\R\backslash\{0\}$ and $\ka=-\la$, solutions of \eqref{gmhd} restricted to a particular family of Lagrangian paths behave just like solutions of an Euler-related equation known to undergo finite-time blowup. In section \ref{sec:comparison}, we use a standard Sturm comparison argument to derive a blowup criterion for solutions of the 2D ideal MHD equations of the form \eqref{spf} in terms of the associated 2D Euler hydrodynamic pressure and 2D MHD total pressure. All blowup results in sections \ref{sec:concavity}-\ref{sec:comparison} require the vanishing of $b(x,0)=b_0(x)$ and $b_x(x,0)=b_0'(x)$ at a point $x_0\in[0,1]$. As a result of the ansatz \eqref{spf}, this implies singularity formation at a point at which the vector-valued function $\bfb(x,\bfx',t)$ is identically zero. In section \ref{sec:suppressing}, we show that for particular values of $\la$ and $\ka$, removing the vanishing condition from $b_0'(x_0)$ (and, thus, from $\bfb(x_0,\bfx',t)$) turns a previously singular solution into a global one. Lastly, in section \ref{sec:zero}, we prove global-in-time existence in the simple case $(\la,\ka)=(0,0)$.

The table below summarizes most of the regularity results in this paper. Assuming smooth initial data, the table describes parameter values for which $u_x$ and $b_{xx}$ leave $L^{\infty}([0,1])$ in finite time, with a couple of rows referencing the behavior of $b_x$ as well. An empty cell means we have no definite information about the evolution of any $L^p$ norms of the function in question, although in many cases we do know about their pointwise behavior at certain locations in $[0,1]$. See the Theorem referenced in the table for details on such behavior as well as additional assumptions placed on the initial data.    
\begin{table}[h]
\centering
\begin{tabular}{|c|c|c|c|c|}
\hline
$(\la,\ka)$  & $u_x$  & $b_{xx}$  & $b_x$   & \text{See Theorem}  \\ \hline
$[-1,0)\times(-\infty,-\la]$   & $\notin L^{\infty}$    & $\notin L^{\infty}$    &   & \ref{concave}   \\ \hline

$\{-\frac{1}{2}\}\times\left(-\infty,\frac{1}{2}\right]$   & $\notin L^{\infty}, \in L^2$    & $\notin L^{\infty}$    & $\in L^2$   & \ref{energyconserved}   \\ \hline

$\left(\frac{1}{2},1\right]\times\{-\la\}$   & $\notin L^{\infty}$    & $\notin L^{\infty}$    &     & \ref{lambdapos}    \\ \hline

$(1,\infty)\times\{-\la\}$   & $\notin L^{\infty}$    & $\in L^{\infty}$    &     & \ref{lambdapos}   \\ \hline

$(-2,0)\times\{-\la\}$   & $\notin L^{\infty}$   & $\notin L^{\infty}$    &     & \ref{lambdaneg}    \\ \hline

$(-\infty,-2]\times\{-\la\}$   & $\notin L^{\infty}$   & $\in L^{\infty}$    &     & \ref{lambdaneg}    \\ \hline

$(0,0)$   & $\in L^{\infty}$    & $\in L^{\infty}$    & $\in L^{\infty}$   & \ref{global2}    \\ \hline

$\left(0,\frac{1}{2}\right]\times\{-\la\}$   &     &     &    &    \\ \hline

$\R\times(-\la,\infty)$   &     &     &     &    \\ \hline

$\left[(-\infty,-1)\cup [0,\infty)\right]\times(-\infty,-\la)$   &     &    &     &    \\ \hline

\end{tabular}
\end{table}

\newpage

\section{Preliminary Results}
\label{prelim}

Define the Lagrangian trajectories $\ga$ through the IVP
\begin{align}
\label{lagrangian}
\ga_t(\fa,t)=u(\ga(\fa,t),t),\qquad \ga(\fa,0)=\fa
\end{align}
for $\fa\in[0,1]$. Differentiating \eqref{lagrangian} gives
\begin{align}
\label{gafa}
\ga_{\fa t}=u_x(\ga(\fa,t),t)\cdot\gafa,
\end{align}
whose solution, the Jacobian $\gafa$, is given by
\begin{align}
\label{jacobian}
\gafa(\fa,t)=e^{\int_0^tu_x(\ga(\fa,s),s)\,ds}.
\end{align}
Next, recall the notion of the order of a zero of a function.
\begin{definition}
We say a non-trivial, $m-$times differentiable function $f(x)$ has a zero of order $m\in\mathbb{Z}^+$ at $x=x_0$, if $$f(x_0)=f'(x_0)=\cdots=f^{(m-1)}(x_0)=0$$ 
but $f^{(m)}(x_0)\neq0$.
\end{definition}
Lemma \ref{lemma:order} below states that the order of a zero of the initial condition $b(x,0)=b_0(x)$ is preserved by $b(x,t)$ along Lagrangian trajectories for as long as a solution exists. This property will be central to most blowup results in this paper. 
\begin{lemma}
\label{lemma:order}
Consider \eqref{gmhd} with the Dirichlet boundary condition \eqref{dbc} or the periodic boundary condition \eqref{pbc}. Suppose the initial data $(u(x,0),b(x,0))=(u_0(x),b_0(x))$ is smooth and $b_0(\fa)$ has a zero of order $m\in\mathbb{Z}^+$ at $\fa=\fa_0$. Then, the order $m$ is preserved by $b(x,t)$ along $\ga(\fa_0,t)$ for as long as a solution exists, i.e., for all $0\leq n\leq m-1$,
\begin{align}
\label{order}
\frac{\partial^{n}b}{\partial x^{n}}(x,t)\circ\ga(\fa_0,t)\equiv0, \qquad\text{while}\qquad \frac{\partial^mb}{\partial x^m}(x,t)\circ\ga(\fa_0,t)\neq 0.
\end{align}
Additionally, for all $\ka\in\R$, the $m_\text{th}$ spatial partial of $b(x,t)$ along $\ga(\fa_0,t)$ satisfies  
\begin{align}
\label{orderpde}
\frac{d}{dt}\left[\left(\frac{\partial^mb}{\partial x^m}(x,t)\right)\circ\ga(\fa_0,t)\right]=(\ka-m)\left(u_x(x,t)\frac{\partial^mb}{\partial x^m}(x,t)\right)\circ\ga(\fa_0,t)
\end{align}
with solution 
\begin{align}
\label{jacorder}
\frac{\partial^mb}{\partial x^m}(x,t)\circ\ga(\fa_0,t)=b_0^{(m)}(\fa_0)\cdot\gafa^{\ka-m}(\fa_0,t).
\end{align}
 
\end{lemma}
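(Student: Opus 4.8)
The plan is to track each spatial derivative $\partial_x^n b$, $0\le n\le m$, along the single trajectory $\ga(\fa_0,t)$ by differentiating the transport equation \eqref{gmhd}-ii) in $x$ and reading off the resulting structure. Writing $D_t=\partial_t+u\partial_x$ for the material derivative, so that $\frac{d}{dt}\left[f\circ\ga(\fa_0,t)\right]=(D_t f)\circ\ga(\fa_0,t)$, the first step is to apply $\partial_x^n$ to $D_t b=\ka b u_x$ and collect terms via the Leibniz rule. The key observation is that the top derivative $\partial_x^n b$ enters in exactly two places: the convective term $\partial_x^n(ub_x)$ contributes $n\,u_x\,\partial_x^n b$, while the source term $\ka\,\partial_x^n(bu_x)$ contributes $\ka\,u_x\,\partial_x^n b$. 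After the $u\,\partial_x^{n+1}b$ piece recombines with $(\partial_x^n b)_t$ into $D_t(\partial_x^n b)$, one is left with the recursion
\begin{equation}
\label{plan:rec}
D_t(\partial_x^n b)=(\ka-n)\,u_x\,\partial_x^n b+\sum_{j=0}^{n-1}c_{n,j}\,\partial_x^j b,
\end{equation}
where each $c_{n,j}$ is a polynomial in the derivatives of $u$. The explicit form of the $c_{n,j}$ is immaterial; what matters is that every lower-order term carries a factor $\partial_x^j b$ with $j<n$.

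The second step is a strong induction on $n$ establishing the vanishing half of \eqref{order}. Assume $\partial_x^j b\circ\ga(\fa_0,t)\equiv0$ for all $j<n$, with $n\le m-1$. Evaluating \eqref{plan:rec} along $\ga(\fa_0,t)$ annihilates the entire sum by the inductive hypothesis, leaving the scalar linear ODE $y'(t)=(\ka-n)\big(u_x\circ\ga(\fa_0,t)\big)\,y(t)$ for $y(t):=\partial_x^n b\circ\ga(\fa_0,t)$, with initial value $y(0)=b_0^{(n)}(\fa_0)=0$ since $b_0$ has a zero of order $m$ at $\fa_0$. Uniqueness for linear ODEs — the coefficient $u_x\circ\ga$ being continuous on the existence interval by smoothness of the solution — forces $y\equiv0$, closing the induction and yielding $\partial_x^n b\circ\ga(\fa_0,t)\equiv0$ for all $0\le n\le m-1$.

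The third step handles $n=m$. The recursion \eqref{plan:rec} holds verbatim at $n=m$, and evaluating it along $\ga(\fa_0,t)$ now kills the sum by the vanishing just proved for $j\le m-1$, producing \eqref{orderpde} directly. Solving this linear ODE with initial value $b_0^{(m)}(\fa_0)$ and inserting the Jacobian formula \eqref{jacobian} gives
\[
\partial_x^m b\circ\ga(\fa_0,t)=b_0^{(m)}(\fa_0)\,e^{(\ka-m)\int_0^t u_x(\ga(\fa_0,s),s)\,ds}=b_0^{(m)}(\fa_0)\,\gafa^{\ka-m}(\fa_0,t),
\]
which is \eqref{jacorder}. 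Since $\gafa>0$ and $b_0^{(m)}(\fa_0)\neq0$, this quantity never vanishes, giving the remaining assertion in \eqref{order}.

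The only genuinely delicate point is the combinatorial bookkeeping in the first step: verifying that the top-order coefficient is exactly $(\ka-n)u_x$, i.e.\ the cancellation of the $n\,u_x$ produced by convection against the $\ka\,u_x$ produced by the source term $\ka bu_x$, and confirming that no surviving term fails to carry a factor $\partial_x^j b$ with $j<n$. Once \eqref{plan:rec} is in hand, the remainder is routine linear-ODE theory; the induction is precisely what propagates the vanishing of successive derivatives down the single trajectory $\ga(\fa_0,t)$.
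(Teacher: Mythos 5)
Your proposal is correct and follows essentially the same route as the paper: the paper computes the evolution equations for $b$, $b_x$, and $b_{xx}$ along $\ga$ explicitly and then invokes ``a simple induction argument, which we omit for the sake of brevity,'' which is precisely the general Leibniz recursion $D_t(\partial_x^n b)=(\ka-n)u_x\,\partial_x^n b+\sum_{j<n}c_{n,j}\,\partial_x^j b$ and strong induction that you carry out. Your verification of the top-order coefficient $(\ka-n)u_x$ and the linear-ODE uniqueness step correctly supply the details the paper leaves implicit.
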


\begin{proof}
The order conservation \eqref{order} follows directly from equation \eqref{gmhd}-ii) and definition \eqref{lagrangian} of the Lagrangian trajectories $\ga$ as follows. The evolution of $b$, $b_x$ and $b_{xx}$ along $\ga$ are given by
\begin{align*}
\frac{d}{dt}\left(b(\ga(\fa,t),t)\right)&=(\ka bu_x)\circ\ga(\fa,t),
\\
\frac{d}{dt}\left(b_x(\ga(\fa,t),t)\right)&=\left((\ka-1) u_xb_x+\ka bu_{xx}\right)\circ\ga(\fa,t),
\\
\frac{d}{dt}\left(b_{xx}(\ga(\fa,t),t)\right)&=\left((\ka-2) u_xb_{xx}+(2\ka-1)b_x u_{xx}+\ka bu_{xxx}\right)\circ\ga(\fa,t).
\end{align*}
Solving the first equation gives
\begin{align}
\label{balong}
b(\ga(\fa,t),t)=b_0(\fa)\cdot\gafa^{\ka}(\fa,t),
\end{align}
where the Jacobian, $\gafa$, which is positive and finite for as long as $\ga$ remains a diffeomorphism of the unit interval, is given by \eqref{jacobian}. From \eqref{balong}, it follows that if $b_0(\fa_0)=0$, then $b(\ga(\fa_0,t),t)\equiv0$ for as long as $\gafa(\fa_0,t)>0$ is defined. Assume $b_0(\fa_0)=0$. Then setting $\fa=\fa_0$ into the second equation and using the vanishing of $b\circ\ga(\fa_0,t)$ gives
$$\frac{d}{dt}\left(b_x(\ga(\fa_0,t),t)\right)=(\ka-1)\left(u_xb_x\right)\circ\ga(\fa_0,t)$$
with solution 
$$b_x(\ga(\fa_0,t),t)=b_0'(\fa_0)\cdot\gafa^{\ka-1}(\fa_0,t).$$
Thus, if in addition to $b_0(\fa_0)=0$ we now assume that $b_0'(\fa_0)=0$, then $b(\ga(\fa_0,t),t)\equiv0$ and $b_x(\ga(\fa_0,t),t)\equiv0$ for as long as a solution exists. Using the same procedure on the third equation shows that if $b_0(\fa_0)=b_0'(\fa_0)=b_0''(\fa_0)=0$, then $b(\ga(\fa_0,t),t)=b_x(\ga(\fa_0,t),t)=b_{xx}(\ga(\fa_0,t),t)\equiv0$. Subsequent higher-order spatial derivatives of $b$ along $\ga$ can be shown to follow this pattern in much the same way. In fact, a simple induction argument, which we omit for the sake of brevity, suffices to establish \eqref{order} as well as equations $(\ref{orderpde})$ and \eqref{jacorder}. This finishes the proof of the Lemma.
\end{proof}
Our next Lemma will be used in section \ref{sec:energy} to prove finite-time blowup for $\lambda=-1/2$ and $\ka\leq 1/2$. To simplify the notation, we write
$$\|f\|_p=\|f\|_{L^p([0,1])},\qquad 1\leq p\leq\infty.$$
\begin{lemma}
\label{lemma:energyconserved}
Consider the system \eqref{gmhd} with the Dirichlet boundary condition \eqref{dbc} or the periodic boundary condition \eqref{pbc} and smooth initial data $(u_0(x),b_0(x))$. Set 
\begin{align}
\label{energy}
\e(t)=\|u_x\|_2^2+\|b_x\|_2^2
\end{align}
and suppose $(\la,\ka)\in\{-1/2\}\times\R$. Then 
\begin{align}
\label{conserved}
\e(t)=\e(0)
\end{align}
for as long as solutions exist. Moreover, for $I(t)$ the nonlocal term in \eqref{gmhd}-iv) and $\e_0=\e(0)$,  
\begin{align}
\label{k>=0}
-\frac{1}{2}\e_0\leq I(t)\leq\left(\ka-\frac{1}{2}\right)\e_0
\end{align}
for $\ka\geq 0$, while
\begin{align}
\label{k<0}
\left(\ka-\frac{1}{2}\right)\e_0\leq I(t)\leq-\frac{1}{2}\e_0
\end{align}
for $\ka<0$.
\end{lemma}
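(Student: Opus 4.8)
The plan is to establish the conservation law \eqref{conserved} first, and then deduce the two-sided bounds \eqref{k>=0} and \eqref{k<0} as elementary algebraic consequences. For the conservation law, I would compute $\frac{d}{dt}\e(t)$ directly by differentiating under the integral sign and substituting the evolution equations for $u_x$ and $b_x$. The quantity $\|u_x\|_2^2$ evolves via \eqref{gmhd}-i), which at $\la=-1/2$ reads $u_{xt}+uu_{xx}=-\tfrac12 u_x^2+\tfrac12 b_x^2+\ka bb_{xx}+I(t)$; note that $u_{xt}=(u_{xt}+uu_{xx})-uu_{xx}$, so I can write $\frac{d}{dt}\int_0^1 u_x^2\,dx = 2\int_0^1 u_x u_{xt}\,dx$ and replace $u_{xt}$ using the equation. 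For $\|b_x\|_2^2$, I would differentiate \eqref{gmhd}-ii) once in $x$ to get an evolution equation for $b_x$ (this is exactly the $m=1$ computation carried out in the proof of Lemma \ref{lemma:order}), namely $b_{xt}+ub_{xx}=(\ka-1)u_xb_x+\ka bb_{xx}$.

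The key step is integration by parts together with the boundary conditions \eqref{dbc} or \eqref{pbc}, which kill all boundary terms and reduce several integrals to multiples of $\|u_x\|_2^2$ and $\|b_x\|_2^2$. For instance, transport terms like $\int_0^1 u_x(uu_{xx})\,dx$ and $\int_0^1 b_x(ub_{xx})\,dx$ integrate by parts to produce $-\tfrac12\int_0^1 u_x^3\,dx$ and $-\tfrac12\int_0^1 u_x b_x^2\,dx$ respectively, and similarly the coupling terms $\int_0^1 u_x(\ka bb_{xx})\,dx$ and $\int_0^1 b_x(\ka bb_{xx})\,dx$ rearrange after integration by parts. The nonlocal term contributes $2I(t)\int_0^1 u_x\,dx$, which vanishes under either boundary condition since $\int_0^1 u_x\,dx = u(1,t)-u(0,t)=0$. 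I expect the surviving local terms to combine, using the explicit form of $I(t)$ in \eqref{gmhd}-iv) evaluated at $\la=-1/2$, so that $\frac{d}{dt}\e(t)=0$; the precise cancellation of the cubic $\int u_x^3$ and mixed $\int u_x b_x^2$, $\int u_x b\, b_{xx}$-type integrals against the $I(t)$ contribution is the arithmetically delicate point and the main obstacle. This is presumably why the hypothesis is sharply $\la=-1/2$: that value is what makes the coefficients align for cancellation.

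Once $\e(t)=\e_0$ is established, the bounds on $I(t)$ are purely algebraic. At $\la=-1/2$, the formula \eqref{gmhd}-iv) becomes
\begin{equation*}
I(t)=\left(\ka-\tfrac12\right)\|b_x\|_2^2-\tfrac12\|u_x\|_2^2.
\end{equation*}
Writing $\|b_x\|_2^2=\e_0-\|u_x\|_2^2$ and substituting gives $I(t)=(\ka-\tfrac12)\e_0-\ka\|u_x\|_2^2$. Since $0\le\|u_x\|_2^2\le\e_0$ by nonnegativity of the two energy pieces, the quantity $-\ka\|u_x\|_2^2$ ranges between $-\ka\e_0$ and $0$ when $\ka\ge0$, and between $0$ and $-\ka\e_0$ when $\ka<0$. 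Plugging these extremes into $I(t)=(\ka-\tfrac12)\e_0-\ka\|u_x\|_2^2$ yields exactly the endpoints $-\tfrac12\e_0$ and $(\ka-\tfrac12)\e_0$, recovering \eqref{k>=0} for $\ka\ge0$ and \eqref{k<0} for $\ka<0$ after tracking which endpoint is the upper and which the lower bound in each sign regime. No further estimates are needed here; the only care required is the bookkeeping of the inequality directions as the sign of $\ka$ flips.
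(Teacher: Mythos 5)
Your proposal is correct and follows essentially the same route as the paper: differentiate $\e(t)$, substitute \eqref{gmhd}-i) and the $x$-derivative of \eqref{gmhd}-ii), integrate by parts so that all boundary terms vanish under \eqref{dbc} or \eqref{pbc}, observe that the $\ka bb_{xx}$ coupling integrals cancel between the two energy pieces while $\la=-\tfrac12$ annihilates the surviving coefficient $(2\la+1)$ on $\int_0^1 u_x^3\,dx-\int_0^1 u_xb_x^2\,dx$, and then derive the bounds exactly as the paper does from $I(t)=\left(\ka-\tfrac12\right)\e_0-\ka\|u_x\|_2^2$ together with $0\leq\|u_x\|_2^2\leq\e_0$. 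One minor imprecision: the explicit form of $I(t)$ plays no role in the conservation law, since its entire contribution $2I(t)\int_0^1u_x\,dx$ vanishes outright (as you correctly note earlier in your own argument), so the ``delicate'' cancellation is purely among the local cubic and mixed terms, not against $I(t)$.
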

\begin{proof}
Multiplying \eqref{gmhd}-i) by $u_x$, integrating, and then using either set of boundary conditions, \eqref{dbc} or \eqref{pbc}, yields
$$\frac{d}{dt}\int_0^1u_x^2\,dx=(2\la+1)\int_0^1u_x^3\,dx-2(\la+\ka)\int_0^1u_xb_x^2\,dx-2\ka\int_0^1bb_xu_{xx}\,dx.$$
Similarly, taking the $x-$derivative of \eqref{gmhd}-ii), multiplying by $b_x$ and integrating gives 
$$\frac{d}{dt}\int_0^1b_x^2\,dx=(2\ka-1)\int_0^1u_xb_x^2\,dx+2\ka\int_0^1bb_xu_{xx}\,dx.$$
Adding the two equations, we obtain 
$$\e'(t)=(2\la+1)\left(\int_0^1u_x^3\,dx-\int_0^1u_xb_x^2\,dx\right).$$
Setting $\la=-1/2$ and then integrating yields \eqref{conserved}. 

For the second part of the Lemma, set $\e(0)=\e_0$ and note that $\e(t)=\e_0$ implies that the nonlocal term $I(t)$ in \eqref{gmhd}-iv) satisfies
\begin{equation}
\label{bounds}
\begin{split}
I(t)&=\left(\ka-\frac{1}{2}\right)\|b_x\|_2^2-\frac{1}{2}\|u_x\|_2^2
\\
&=\left(\ka-\frac{1}{2}\right)\left(\e_0-\|u_x\|_2^2\right)-\frac{1}{2}\|u_x\|_2^2
\\
&=\left(\ka-\frac{1}{2}\right)\e_0-\ka\|u_x\|_2^2\,.
\end{split}
\end{equation}
Since \eqref{conserved} implies that $0\leq \|u_x\|_2^2\leq \e_0$, \eqref{k>=0} and \eqref{k<0} follow from \eqref{bounds}. This finishes the proof of the Lemma.
\end{proof}

\section{Finite-time blowup for $-1\leq \la<0$ and $\ka\leq-\la$}
\label{sec:concavity}
In this section, we show that if $b_0(\fa)$ has a zero of order $m\geq2$ at some $\fa=\fa_0\in[0,1]$, then $\ga^{|\la|}_{\fa}(\fa_0,t)$ is concave for $\la\in[-1,0)$ and $\ka\leq-\la$. This will imply a finite-time singularity in the form of a vanishing Jacobian $\gafa(\fa_0,t)$. 

\begin{theorem}
\label{concave}
Consider the system \eqref{gmhd} with the Dirichlet boundary condition \eqref{dbc} or the periodic boundary condition \eqref{pbc} and smooth initial data $(u_0(\fa),b_0(\fa))$. Suppose $b_0(\fa)$ has a zero of order $m\geq 2$ at $\fa=\fa_0$, i.e., at least $b_0(\fa)$ and $b_0'(\fa)$ vanish at $\fa_0$, and $u_0'(\fa)$ attains its absolute minimum $m_0<0$ at $\fa_0$. Then, for $\la\in[-1,0)$, $\ka\leq-\la$ and $T=\frac{1}{\la m_0}$, 
\begin{align}
\label{concavityblow}
\lim_{t\nearrow T}\int_0^t{u_x(\ga(\fa_0,s),s)\,ds}=-\infty
\end{align}
and
\begin{align}
\label{concavityblow2}
\lim_{t\nearrow T}\bigg|\frac{\partial^mb}{\partial x^m}(x,t)\circ\ga(\fa_0,t)\bigg|=\infty.
\end{align}
\end{theorem}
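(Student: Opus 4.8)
The plan is to reduce the blowup \eqref{concavityblow2} of the $m$th spatial derivative to the divergence \eqref{concavityblow} of the integral, and then to obtain \eqref{concavityblow} by a concavity argument applied to $\eta(t):=\gafa^{|\la|}(\fa_0,t)$, the $|\la|$th power of the Jacobian along the distinguished trajectory. First I would derive the ODE governing $w(t):=u_x(\ga(\fa_0,t),t)$. Differentiating $w$ in $t$ and using \eqref{gmhd}-i) gives $w'=\la w^2-\la\,(b_x)^2+\ka\,bb_{xx}+I(t)$ evaluated along $\ga(\fa_0,t)$. Since $b_0$ has a zero of order $m\geq2$ at $\fa_0$, Lemma \ref{lemma:order} (equation \eqref{order}) guarantees $b(x,t)\circ\ga(\fa_0,t)\equiv b_x(x,t)\circ\ga(\fa_0,t)\equiv0$ for as long as the solution persists, so the two magnetic terms drop out and $w$ obeys the clean Riccati equation $w'=\la w^2+I(t)$ with $w(0)=u_0'(\fa_0)=m_0<0$.

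The decisive structural fact is that the nonlocal term carries a definite sign on the prescribed parameter range. Writing $I(t)=(\la+\ka)\|b_x\|_2^2-(\la+1)\|u_x\|_2^2$, the hypothesis $\la\in[-1,0)$ gives $\la+1\geq0$, so the second term is nonpositive, while $\ka\leq-\la$ gives $\la+\ka\leq0$, so the first term is nonpositive as well; hence $I(t)\leq0$ throughout. This is precisely where both hypotheses $\la\in[-1,0)$ and $\ka\leq-\la$ are consumed, and I expect it to be the conceptual crux of the argument: the sign of $I$ is what converts the Riccati equation into a concavity statement.

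With this in hand I would pass to $\eta=\gafa^{|\la|}$. Since $\gafa'=w\,\gafa$ by \eqref{gafa}, a direct computation gives $\eta'=|\la|\,w\,\eta$ and, substituting $w'=\la w^2+I$ and using $\la=-|\la|$, the cancellation $\eta''=|\la|\,\eta\,(w'+|\la|w^2)=|\la|\,I(t)\,\eta$. As $\eta>0$ while $\ga$ remains a diffeomorphism and $I(t)\leq0$, we conclude $\eta''\leq0$, so $\eta$ is concave with $\eta(0)=1$ and $\eta'(0)=|\la|m_0<0$. A concave function lies below its tangent at $t=0$, so $0<\eta(t)\leq1+|\la|m_0t$; the right-hand side vanishes exactly at $t=-1/(|\la|m_0)=1/(\la m_0)=T$, forcing $\eta$, and hence $\gafa(\fa_0,t)$, to reach $0$ no later than $T$. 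Because $\int_0^t w\,ds=\log\gafa(\fa_0,t)=\tfrac{1}{|\la|}\log\eta(t)$, the integral diverges to $-\infty$ as $t$ approaches this first vanishing time, which gives \eqref{concavityblow}.

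Finally, \eqref{concavityblow2} follows from \eqref{concavityblow} through \eqref{jacorder}: along $\ga(\fa_0,t)$ one has $\frac{\partial^m b}{\partial x^m}(x,t)\circ\ga(\fa_0,t)=b_0^{(m)}(\fa_0)\,\gafa^{\ka-m}(\fa_0,t)$, where $b_0^{(m)}(\fa_0)\neq0$ because the zero has exact order $m$. As $\int_0^t w\,ds\to-\infty$ we have $\gafa(\fa_0,t)\to0^+$, and the exponent satisfies $\ka-m\leq-\la-m\leq1-2<0$ (using $\la\geq-1$ and $m\geq2$), so $\gafa^{\ka-m}\to\infty$ and the claim follows. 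Beyond the sign of $I(t)$, the one point requiring care is the bookkeeping of the maximal existence interval: the tangent-line estimate caps the blowup time by $T$, and one should verify that the solution cannot be continued past the first zero of $\gafa(\fa_0,t)$, so that the divergence in \eqref{concavityblow}–\eqref{concavityblow2} is genuinely realized as $t$ increases toward $T$.
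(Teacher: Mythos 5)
Your proposal is correct and follows essentially the same route as the paper: it kills the magnetic terms along $\ga(\fa_0,t)$ via Lemma \ref{lemma:order}, uses $\la\in[-1,0)$ and $\ka\leq-\la$ to get $I(t)\leq 0$, deduces concavity of $\gafa^{|\la|}(\fa_0,t)$ (your Riccati computation $\eta''=|\la|I(t)\eta$ is just the paper's identity \eqref{last} in different clothing), applies the tangent-line bound $0<\eta(t)\leq 1+|\la|m_0t$, and concludes \eqref{concavityblow2} from \eqref{jacorder} with $\ka-m<0$. Your explicit flagging of the maximal-existence bookkeeping is a point the paper handles only implicitly via the phrase ``for as long as solutions exist,'' but it does not change the argument.
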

\begin{proof}
Differentiating \eqref{gafa} with respect to $t$ and then using \eqref{gmhd}-i) and \eqref{gafa} gives
\begin{equation}
\label{concavity-1}
\begin{split}
\ga_{\fa tt}&=\left(u_x^2\circ\ga\right)\cdot\gafa+\left(\left(\la u_x^2-\la b_x^2+\ka bb_{xx}\right)\circ\ga+I(t)\right)\cdot\gafa
\\
&=
(1+\la)\frac{\ga^2_{\fa t}}{\gafa}+\left(\left(\ka bb_{xx}-\la b_x^2\right)\circ\ga+I(t)\right)\cdot\gafa,
\end{split}
\end{equation}
which can be rearranged as
\begin{align}
\label{concavity0}
\frac{\gamma_{\fa}\cdot\gamma_{\fa tt}-(1+\la)\gamma^2_{\fa t}}{\gamma^2_{\fa}}=\left(\ka bb_{xx}-\la b_x^2\right)\circ \gamma+I(t)
\end{align}
or, equivalently, 
\begin{align}
\label{concavity1}
-\frac{1}{\la}\gamma_{\fa}^{\la}\cdot\partial_t^2\left(\gamma_{\fa}^{-\la}\right)=(\ka bb_{xx}-\la b_x^2)\circ\gamma+I(t)
\end{align}
for $\la\neq 0$ and $I(t)=(\la+\ka)\|b_x\|_2^2-(1+\la)\|u_x\|_2^2$. Let $\fa_0\in[0,1]$ be such that $b_0(\fa)$ has a zero of order $m\geq 2$ at $\fa_0$. By Lemma \ref{lemma:order}, this implies that $b(\ga(\fa_0,t),t)\equiv0$ and $b_x(\ga(\fa_0,t),t)\equiv0$ for as long as solutions exist. Consequently, setting $\fa=\fa_0$ in \eqref{concavity1} yields
\begin{align}
\label{last}
-\frac{1}{\la}\gamma_{\fa}^{\la}(t)\cdot\frac{d^2}{dt^2}\left(\gamma_{\fa}^{-\la}(t)\right)=I(t)
\end{align}

for $\gamma_{\fa}(t)=\gamma_{\fa}(\fa_0,t)$. Suppose $\ka\leq -\la$ for $-1\leq\la<0 $. Then  
$$(\la+\ka)\|b_x\|_2^2-(1+\la)\|u_x\|_2^2\leq 0,$$
so that 
$$-\frac{1}{\la}\gamma_{\fa}^{\la}(t)\cdot\frac{d^2}{dt^2}\left(\gamma_{\fa}^{-\la}(t)\right)\leq 0.$$
Since $\gamma_{\fa}(t)>0$ for as long as solutions exist and $-1\leq \la<0$, we must have that 
\begin{align}
\label{ineq1}
\frac{d^2}{dt^2}\left(\gamma_{\fa}^{|\la|}(t)\right)\leq 0,
\end{align}
which implies that $\gamma_{\fa}^{|\la|}(t)$ is concave. In particular, this implies that  the graph of $\gamma_{\fa}^{|\la|}(t)$ must lie below its tangent line at $t=0$. Suppose $u_0'(\fa)$ attains its absolute minimum $m_0<0$ at $\fa_0$. Then, using $$\ga_{\fa t}(\fa_0,t)=\gafa(\fa_0,t)\cdot u_x(\ga(\fa_0,t),t),\quad \gamma_{\fa t}(\fa_0,0)=u_0'(\fa_0),\quad \gafa(\fa_0,0)=1,\quad u_0'(\fa_0)=m_0,$$
along with concavity of $\gamma_{\fa}^{|\la|}(t)$, we obtain
\begin{align}
\label{ineq2}
0<\gamma_{\fa}^{|\la|}(t)\leq 1-\la m_0t.
\end{align}
Taking the limit as $t\nearrow T=\frac{1}{\la m_0}$ in \eqref{ineq2} and recalling \eqref{jacobian} yields \eqref{concavityblow}.   

Lastly, since $-1\leq \la<0$, we have that $\ka\leq|\la|\leq 1$, and since $m\geq 2$, this implies that $\ka-m<0$. Using this and the vanishing of $\ga_{\fa}(\fa_0,t)$ as $t\nearrow T$ from \eqref{ineq2} on formula \eqref{jacorder} in Lemma \ref{lemma:order} yields
\begin{align}
\label{concavityblow3}
\lim_{t\nearrow T}\bigg|\frac{\partial^mb}{\partial x^m}(x,t)\circ\ga(\fa_0,t)\bigg|=\lim_{t\nearrow T}\frac{|b_0^{(m)}(\fa_0)|}{\ga_{\fa}(\fa_0,t)^{|m-\ka|}}=\infty,
\end{align}
where we also used $b_0^{(m)}(\fa_0)\neq 0$, a consequence of $b_0(\fa)$ having a zero of order $m\geq 2$ at $\fa_0$. This concludes the proof of the Theorem.
\end{proof}

Finite-time blowup of $\|u_x\|_{\infty}$ for the above parameter values is actually easier to obtain directly from equation \eqref{gmhd}-i). However, the blowup argument in Theorem \ref{concave} above highlights the strength of the singularity in the sense that the blowup is not time-integrable, something that, as we will see in Section \ref{sec:pje}, is not always the case. 

\begin{remark}
What if we have convexity? Suppose $\la+\ka\geq 0$ and $1+\la\leq 0$ in \eqref{last}. Then 
$$\frac{d^2}{dt^2}\left(\gamma_{\fa}^{|\la|}(t)\right)\geq 0,$$
and so $\gamma_{\fa}^{|\la|}(t)$ is now convex. Therefore, 
$$\gamma_{\fa}^{|\la|}(t)\geq 1+|\la|u_0'(\fa_0)t.$$
If $\fa_0$ is now chosen such that $u_0'(\fa_0)>0$, then $\gamma_{\fa}^{|\la|}(t)\geq1+|\la|u_0'(\fa_0)t>1$
for all $t\geq 0$ for which a solution is defined. This means that if there is finite-time blowup for $\la+\ka\geq 0$ and $1+\la\leq 0$ under the above assumptions on $u_0$ and $b_0$, such singularity cannot come in the form of a vanishing $\gamma_{\fa}(\fa_0,t)$, but as blowup of $u_x(\gamma(\fa_0,t),t)$ to $+\infty$; it can be shown that blowup to $-\infty$ in $u_x(\ga(\fa_0,t),t)$ cannot happen by working directly with \eqref{gmhd}-i), which gives $\partial_t(u_x\circ\ga(\fa_0,t))\geq \la u_x^2\circ\ga(\fa_0,t)$ for $\la\leq -1$. This implies, after solving, that if $u_0'(\fa_0)>0$ and there is blowup in $u_x$ along this particle trajectory, then it must be to $+\infty$ as $u_x$ starts off positive at $t=0$. Moreover, note that in this case we do not require $b_0'(\fa_0)$ to vanish since $\la<0$ implies $-\la b^2_x\circ\ga(\fa_0,t)\geq 0$. So we only need $b_0(\fa_0)=0$. 
\end{remark}

\section{Additional Regularity Results for $(\la,\ka)\in\{-1/2\}\times(-\infty,1/2]$}
\label{sec:energy}
The range of parameters to which the blowup result in this section applies falls under the parameter values covered in Theorem \ref{concave} of the previous section. Nevertheless, we include it here for two reasons. First, the result in this section uses the \textit{energy} conservation Lemma \ref{lemma:energyconserved} instead of a concavity argument. This implies that, even though we find the same finite-time blowup in the $L ^{\infty}([0,1])$ norm that we did in section \ref{sec:concavity}, the singularity here appears to be \textit{milder} than the blowup in Theorem \ref{concave} in the sense that solutions remain in other $L^p$ spaces for $1\leq p<\infty$. Second, the solution's energy being conserved potentially implies that some notion of a solution of \eqref{gmhd} could be defined in a weak sense past the singularity time (see, e,g, \cite{cho}). 
\begin{theorem}
\label{energyconserved}
Consider the system \eqref{gmhd} with the Dirichlet boundary condition \eqref{dbc} or the periodic boundary condition \eqref{pbc}. Assume the initial data $(u_0(x),b_0(x))$ is smooth and $u_0'(\fa)$ attains its absolute minimum, $m_0<0$, at some $\fa_0\in[0,1]$. Then
\begin{enumerate}
\item For $(\la,\ka)\in\{-1/2\}\times(-\infty,1/2]$, $\ka\neq 0$, and $\fa=\fa_0$ a zero of order $m\geq2$ of $b_0$, i.e., at least $b_0$ and $b_0'$ vanish at $\fa=\fa_0$, there exists a finite time $T>0$ such that
\begin{align}
\label{ucons1} 
\lim_{t\nearrow T}u_x(\ga(\fa_0,t),t)=-\infty
\end{align}
and
\begin{align}
\label{bcons1} 
\lim_{t\nearrow T}\bigg|\left(\frac{\partial^mb}{\partial x^m}(x,t)\right)\circ\ga(\fa_0,t)\bigg|=\infty.
\end{align}
\item For $(\la,\ka)=(-1/2,0)$ and $b_0'(\fa_0)=0$ (with $b_0(\fa_0)$ not necessarily zero), there exists a finite time $T>0$ such that 
\begin{align}
\label{ucons2} 
\lim_{t\nearrow T}u_x(\ga(\fa_0,t),t)=-\infty.
\end{align}
Additionally, $b(\ga(\fa_0,t),t)=b_0(\fa_0)$ for all $0\leq t\leq T$ and, for $b_0''(\fa_0)\neq0$,      
\begin{align}
\label{bcons2} 
\lim_{t\nearrow T}\bigg|b_{xx}(\ga(\fa_0,t),t)\bigg|=\infty.
\end{align}
%If $b_0''(\fa_0)=0$, then the next spatial derivative of $b(x,t)$ for which $b_0^{(m)}(\fa_0)\neq 0$, $m\geq 3$, blows up along $\ga(\fa_0,t)$ as $t\nearrow T$.
\item Let $T>0$ be the blowup time in \eqref{ucons1}-\eqref{bcons2} for $(\la,\ka)\in\{-1/2\}\times(-\infty,1/2]$. Then, under the Dirichlet boundary condition \eqref{dbc}, 
\begin{align}
\label{coro1} 
\|u(\cdot,t)\|_{\infty}\leq C\|u_x(\cdot,t)\|_2\leq C
\end{align}
and 
\begin{align}
\label{coro2} 
\|b(\cdot,t)\|_{\infty}\leq C\|b_x(\cdot,t)\|_2\leq C
\end{align}
for all $0\leq t\leq T$ and positive constants $C$ that may change value from line to line. 
\item Let $T>0$ be the blowup time in \eqref{ucons1}-\eqref{bcons2} for $(\la,\ka)\in\{-1/2\}\times(-\infty,1/2]$ with the periodic boundary condition \eqref{pbc}. If in addition to $u_0'(\fa)$ attaining its absolute minimum $m_0<0$ at $\fa_0$, we choose $u_0(x)$ to have mean zero over $[0,1]$ and assume $p$ in \eqref{gmhd}-iii) satisfies the boundary condition $p(0,t)=p(1,t)$, then 
 \begin{align}
\label{coro3} 
\|u(\cdot,t)\|_{\infty}\leq C\|u_x(\cdot,t)\|_2\leq C
\end{align}
\end{enumerate}
for all $0\leq t\leq T$.
\end{theorem}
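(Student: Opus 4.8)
The plan is to track $w(t):=u_x(\ga(\fa_0,t),t)$ along the Lagrangian trajectory through $\fa_0$, reduce its evolution to a Riccati-type differential inequality by means of the energy conservation Lemma~\ref{lemma:energyconserved}, and then transfer the resulting $L^\infty$ blowup of $w$ to the higher spatial derivatives of $b$ through the Jacobian identity \eqref{jacorder}. Differentiating \eqref{gmhd}-i) along $\ga$ and using \eqref{lagrangian} yields $w'(t)=(\la u_x^2-\la b_x^2+\ka bb_{xx})\circ\ga(\fa_0,t)+I(t)$. For part (1), the assumption that $b_0$ has a zero of order $m\ge 2$ at $\fa_0$ lets me apply Lemma~\ref{lemma:order} to conclude $b\circ\ga(\fa_0,t)\equiv b_x\circ\ga(\fa_0,t)\equiv 0$, so the evolution collapses to $w'=\la w^2+I(t)=-\tfrac12 w^2+I(t)$. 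Since $\ka\le \tfrac12$, the bounds \eqref{k>=0}--\eqref{k<0} force $I(t)\le 0$, whence $w'\le-\tfrac12 w^2$ with $w(0)=m_0<0$.

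From here a standard ODE comparison against the solution of $\phi'=-\tfrac12\phi^2$, $\phi(0)=m_0$, shows $w\to-\infty$ at a finite time $T\le -2/m_0$, giving \eqref{ucons1}. For \eqref{bcons1} I first note that the present parameters satisfy the hypotheses of Theorem~\ref{concave} ($\la=-\tfrac12\in[-1,0)$ and $\ka\le -\la$), so estimate \eqref{ineq2} applies and yields $\gafa(\fa_0,t)\to 0^+$ as $t\nearrow T$ (equivalently, the pole-type, non-integrable nature of the Riccati blowup forces $\int_0^t u_x\circ\ga\,ds\to-\infty$). Since $\ka\le\tfrac12<2\le m$, the exponent $\ka-m$ in \eqref{jacorder} is negative and $b_0^{(m)}(\fa_0)\neq0$, so $|\partial_x^m b\circ\ga|=|b_0^{(m)}(\fa_0)|\,\gafa^{\ka-m}\to\infty$.

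For part (2) with $\ka=0$ the coupling is structurally different and Lemma~\ref{lemma:order} cannot be invoked directly since $b_0(\fa_0)$ need not vanish; I therefore re-derive the transport of $b$ and its derivatives along $\ga$ from \eqref{gmhd}-ii). The term $\ka bb_{xx}$ drops out identically, \eqref{gmhd}-ii) reduces to pure transport so that $b\circ\ga(\fa_0,t)=b_0(\fa_0)$, and the $b_x$-equation becomes the homogeneous linear ODE $\tfrac{d}{dt}(b_x\circ\ga)=-w\,(b_x\circ\ga)$; hence $b_0'(\fa_0)=0$ propagates to $b_x\circ\ga(\fa_0,t)\equiv0$, killing the $-\la b_x^2$ term and reinstating $w'=-\tfrac12 w^2+I(t)$ with $I(t)\equiv-\tfrac12\e_0$ by \eqref{k>=0}. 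This gives \eqref{ucons2} exactly as before. Differentiating once more, the vanishing of $b_x\circ\ga$ collapses the $b_{xx}$-equation to $\tfrac{d}{dt}(b_{xx}\circ\ga)=-2w\,(b_{xx}\circ\ga)$, so $b_{xx}\circ\ga(\fa_0,t)=b_0''(\fa_0)\,\gafa^{-2}$, which blows up as in \eqref{bcons2} once $\gafa\to0$ and $b_0''(\fa_0)\neq0$.

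Parts (3) and (4) are where the energy conservation \eqref{conserved} pays off: it bounds $\|u_x\|_2,\|b_x\|_2\le\sqrt{\e_0}$ uniformly in $t$, so the singularity is confined to the $L^\infty$ norm. Under the Dirichlet condition \eqref{dbc}, writing $u(x,t)=\int_0^x u_x\,dy$ and applying Cauchy--Schwarz gives $\|u\|_\infty\le\|u_x\|_1\le\|u_x\|_2\le\sqrt{\e_0}$, and identically for $b$, which is \eqref{coro1}--\eqref{coro2}. Under the periodic condition \eqref{pbc} the boundary term no longer vanishes, so the one genuine subtlety — and the step I expect to be the main obstacle — is showing that the mean-zero property of $u_0$ persists in time. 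For this I integrate the momentum equation \eqref{gmhd}-iii) over $[0,1]$: periodicity makes $\int_0^1 uu_x\,dx=\tfrac12[u^2]_0^1=0$ and $\int_0^1 bb_x\,dx=0$, while the assumption $p(0,t)=p(1,t)$ makes $\int_0^1 p_x\,dx=0$, so $\tfrac{d}{dt}\int_0^1 u\,dx=0$ and $\int_0^1 u\,dx\equiv 0$. A zero-mean $u$ vanishes somewhere on $[0,1]$, and the Poincar\'e--Wirtinger-type estimate $\|u\|_\infty\le\|u_x\|_1\le\|u_x\|_2\le\sqrt{\e_0}$ then yields \eqref{coro3}. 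The chief difficulties are thus the structural bookkeeping of the $\ka=0$ case in part (2) and securing the mean-zero invariance in part (4); the Riccati blowup and the interpolation bounds are routine once these are in place.
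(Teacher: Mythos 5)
Your proposal is correct and follows essentially the same route as the paper: reduction to the Riccati inequality $z'(t)\leq-\tfrac{1}{2}z^2(t)$ via the bounds on $I(t)$ from Lemma~\ref{lemma:energyconserved}, vanishing of the Jacobian $\gafa(\fa_0,t)$, transfer of the blowup to $\partial_x^m b$ through \eqref{jacorder} (with the $\ka=0$ case handled by re-deriving the transport ODEs, exactly as the paper does for $b_{xx}$), and the same $L^\infty$--$L^2$ interpolation in parts (3)--(4). The only cosmetic deviations are that you obtain $\gafa(\fa_0,t)\to0$ by citing the concavity estimate \eqref{ineq2} of Theorem~\ref{concave} rather than integrating the Riccati bound directly as in \eqref{cons3}--\eqref{jac0}, and you replace the paper's Fourier-series step in part (4) with an elementary zero-mean argument; incidentally, your formula $b_{xx}(\ga(\fa_0,t),t)=b_0''(\fa_0)\,\gafa^{-2}(\fa_0,t)$ is the correct one, and the exponent $\gafa^{2}$ appearing in the paper's proof is evidently a typo, as the paper's own subsequent blowup conclusion requires the negative exponent.
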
 
\begin{proof} 
For $z(t)=u_x(\ga(\fa_0,t),t)$, $\fa_0\in[0,1]$, and $\la=-1/2$, we use \eqref{lagrangian} to rewrite \eqref{gmhd}-i) as
\begin{align}
\label{cons0}
z'(t)+\frac{1}{2}z^2(t)-\left(\frac{1}{2}b_x^2+\ka bb_{xx}\right)\circ\ga(\fa_0,t)=I(t).
\end{align}
Suppose $\fa_0$ is such that, at least, $b_0$ and $b_0'$ vanish at $\fa_0$. Then Lemma \ref{lemma:order} implies that $b(\ga(\fa_0,t),t)\equiv 0$ and $b_x(\ga(\fa_0,t),t)\equiv0$ for as long as solutions exist, which reduces \eqref{cons0} to 
\begin{align}
\label{cons1} 
z'(t)+\frac{1}{2}z^2(t)=I(t).
\end{align}
Let $\ka\in(-\infty,1/2]\backslash\{0\}$; we treat the case $\ka=0$ separately. Then $\la=-1/2$ and Lemma \ref{lemma:energyconserved} implies that the nonlocal term 
$$I(t)=\left(\ka-\frac{1}{2}\right)\|b_x\|_2^2-\frac{1}{2}\|u_x\|_2^2$$
satisfies  
$$-\frac{1}{2}\e_0\leq I(t)\leq\left(\ka-\frac{1}{2}\right)\e_0,$$
for $\ka\geq0$, or  
$$\left(\ka-\frac{1}{2}\right)\e_0\leq I(t)\leq-\frac{1}{2}\e_0$$
for $\ka<0$. In the above, $\e_0=\e(0)\in\R^+$ for $\e(t)$ defined in \eqref{energy}. Using these bounds on \eqref{cons1} yields
\begin{align}
\label{cons2} 
z'(t)\leq -\frac{1}{2}z^2(t)
\end{align}
for $\ka\in(-\infty,1/2]\backslash\{0\}$. Suppose $u_0'(\fa)$ attains its absolute minimum $m_0<0$ at $\fa=\fa_0$. Then solving \eqref{cons2} gives
\begin{align}
\label{cons3} 
\frac{1}{m_0}\left(1+\frac{m_0}{2}t\right)\leq \frac{1}{z(t)}< 0,
\end{align}
which implies that 
\begin{align}
\label{cons4} 
\lim_{t\nearrow T}z(t)=-\infty
\end{align}
for $T=-\frac{2}{m_0}$, which proves \eqref{ucons1}. Next, \eqref{cons3} and \eqref{jacobian} imply that
$$0<\ga_{\fa}(\fa_0,t)\leq\left(1+\frac{m_0}{2}t\right)^2,$$
which yields
\begin{align}
\label{jac0} 
\lim_{t\nearrow T}\ga_{\fa}(\fa_0,t)=0.
\end{align}
The finite-time blowup in \eqref{bcons1} now follows from \eqref{jacorder} in Lemma \ref{lemma:order}, $b_0^{(m)}(\fa_0)\neq 0$, $m\geq 2$, $\ka\in(-\infty,1/2]\backslash\{0\}$, and \eqref{jac0}.

For part 2), note that when $\ka=0$ we no longer need to use Lemma \ref{lemma:order} and the assumption that $b_0(\fa_0)=0$ to dispose of the term $\ka bb_{xx}\circ\ga(\fa_0,t)$ in \eqref{cons0}. However, we still require $b_0'(\fa_0)=0$ to eliminate the $\frac{1}{2}b_x^2\circ\ga(\fa_0,t)$ term via the same Lemma. Thus, suppose $b_0'(\fa_0)=0$ for $\ka=0$ and $\la=-1/2$. Then, using \eqref{k>=0} on \eqref{cons0} yields \eqref{cons1}, from which \eqref{cons4} once again follows. Lastly, note that for $\ka=0$, equation \eqref{gmhd}-ii) reduces to the homogeneous transport equation
$$b_t+ub_x=0,$$
whose solution along Lagrangian trajectories satisfying the IVP \eqref{lagrangian} is given by
$$b(\ga(\fa,t),t)=b_0(\fa).$$
Differentiating the above with respect to $\fa$ then gives  
$$\left(b_x\circ\ga\right)\cdot\gafa=b_0'(\fa)$$
or, equivalently,  
$$b_x(\ga(\fa,t),t)=b_0'(\fa)\cdot\ga_{\fa}^{-1}(\fa,t).$$
Differentiating the above one more time and then setting $\fa=\fa_0$ yields
$$b_{xx}(\ga(\fa_0,t),t)=b_0''(\fa_0)\cdot\ga_{\fa}^2(\fa_0,t),$$
where we used $b_0'(\fa_0)=0$. Assuming $b_0''(\fa_0)\neq0$\,\footnote{Which implies that $b_0$ has a local extrema at $\fa_0$ if $\fa_0$ happens to be an interior point of $[0,1]$.} and using \eqref{jac0} yields \eqref{bcons2}. The last part of part 2) is a direct consequence of Lemma \ref{lemma:order} if $b_0''(\fa_0)=0$.

For the last two parts of the Theorem, first note that under the Dirichlet boundary condition \eqref{dbc}, Lemma \ref{lemma:energyconserved} implies that 
$$
|u(x,t)| = \left|\int_0^x u_x(y,t)\,dy \right| \le \|u_x(\cdot,t)\|_2\leq C, 
$$
from which \eqref{coro1} follows. In the case of the periodic boundary condition \eqref{pbc}, suppose $p$ in \eqref{gmhd}-iii) satisfies the boundary condition 
\begin{align}
\label{p} 
p(0,t)=p(1,t)
\end{align}
and $u_0(x)$ has zero mean, i.e.
\begin{align}
\label{meanzero1} 
\int_0^1u_0(x)\,dx=0.
\end{align}
Then, integrating \eqref{gmhd}-iii) with respect to $x$ between $0$ and $1$ and using \eqref{p} and \eqref{meanzero1} implies that
\begin{align}
\label{meanzero2} 
\int_0^1u(x,t)\,dx=0.
\end{align}
If we now write 
$$
u(x,t) =\sum_{k} \widehat{u}_k(t)\,e^{ix k}, \qquad
\widehat{u}_k(t) =\int_0^1 e^{-ikx}\, u(x,t)\,dx,
$$
we have that
$$
\|u(\cdot,t)\|_{L^\infty} \le C\, \left[\sum_{k\not =0}|k|^2 \,|\widehat{u}_k(t)|^2\right]^{1/2} = C\, \|u_x(\cdot,t)\|_{L^2}\leq C
$$
by \eqref{meanzero2} and Lemma \ref{lemma:energyconserved}. This finishes the proof of the Theorem.
\end{proof}

\section{Finite-time Blowup for $(\la,\ka)\in\R\backslash\{0\}\times\{-\la\}$}
\label{sec:pje}

In this section, we establish finite-time blowup of solutions to \eqref{gmhd} under Dirichlet \eqref{dbc} or periodic \eqref{pbc} boundary conditions for $\ka=-\la$ and $\la\in\R\backslash\{0\}$. In particular, we exploit the existence of blowup solutions to a closely related Euler-type equation established in \cite{Sarria01} (and refined in \cite{Sarria0}), along with uniqueness of solution to an IVP for a second-order linear ODE. 

First, note that using the calculus identity
$$\frac{yy''-(1+\la)\left(y'\right)^2}{y^2}=-\frac{y^{\la}}{\la}\frac{d^2}{dt^2}\left(y^{-\la}\right)$$
on \eqref{concavity0} and then setting
\begin{align}
\label{omega}
\om(\fa,t)=\gamma_{\fa}(\fa,t)^{-\la}
\end{align}
in the resulting equation gives
\begin{align}
\label{omegaode}
\om_{tt}+\la\left[\left(\ka bb_{xx}-\la b_x^2\right)\circ \gamma+I(t)\right]\om=0
\end{align}
for $\la\in\R\backslash\{0\}$ and $I(t)=(\la+\ka)\|b_x\|_2^2-(1+\la)\|u_x\|_2^2$.

Since 
\begin{align}
\label{omega'}
\om_t(\fa,t)=-\la\om(\fa,t)\cdot u_x(\ga(\fa,t),t),
\end{align}
we complement \eqref{omegaode} with the initial conditions $\om(\fa,0)=1$ and $\om_t(\fa,0)=-\la u_0'(\fa)$. 

Setting $\la=-\ka$ and $\fa=\fa_0\in[0,1]$ in equation \eqref{omegaode}, for $\fa_0$ such that $b_0(\fa)$ has a zero of order $m\geq 2$ at $\fa_0$, yields 
\begin{align}
\label{omegaodereduced}
\om''(t)+\la I(t)\om(t)=0
\end{align}
for $\la\neq 0$, 
\begin{align}
\label{Ireduced}
I(t)=-(1+\la)\|u_x(\cdot,t)\|_2^2
\end{align}
and $\om(t)=\om(\fa_0,t)$ satisfying the initial conditions 
\begin{align}
\label{cond}
\om(0)=1,\qquad \om'(0)=-\la u_0'(\fa_0).
\end{align}
To obtain \eqref{omegaodereduced} we also used Lemma \ref{lemma:order}, which implies that $\left(\ka bb_{xx}-\la b_x^2\right)\circ \gamma(\fa_0,t)\equiv0$ since both $b_0(\fa)$ and $b_0'(\fa)$ are zero at $\fa_0$. 

Next we discuss the main ingredients in the derivation of a general solution formula for the IVP \eqref{omegaodereduced}-\eqref{cond} (or equivalently, \eqref{omegaodereduced2}-\eqref{cond2}). We start with the following proposition. 
\begin{proposition}
\label{rewrite}
Set $\om(t)=\om(\fa_0,t)$ for $\fa_0\in[0,1]$ a zero of order $m\geq2$ of $b_0(\fa)$. Then for $\la\neq 0$, the ODE \eqref{omegaodereduced} is equivalent to 
\begin{align}
\label{omegaodereduced2}
\om''(t)-\frac{1+\la}{\la}\left(\int_0^1\left(\om(\fa,t)\right)^{-2-\frac{1}{\la}}\om_t^2(\fa,t)\,d\fa\right) \om(t)=0
\end{align}
with the initial conditions 
\begin{align}
\label{cond2}
\om(0)=1,\qquad \om'(0)=-\la u_0'(\fa_0).
\end{align}
\end{proposition}
\begin{proof}
Using \eqref{gafa} and the fact that the solution $\ga(\fa,t)$ of the IVP \eqref{lagrangian} is a diffeomorphism of the unit interval for as long as a solution $u$ of \eqref{gmhd} with boundary conditions \eqref{dbc} or \eqref{pbc} exists, we have that    
$$\|u_x(\cdot,t)\|^2_2=\int_0^1u_x^2\, dx=\int_0^1\left(u_x^2\circ\gamma\right)\cdot\gafa\,d\fa=\int_0^1\ga_{\fa t}^2\cdot\ga_{\fa}^{-1}\,d\fa.$$
This and \eqref{omega} allows us to write $I(t)$ in \eqref{Ireduced} in terms of $\om(\fa,t)$ as
\begin{align}
\label{Ireduced2}  
I(t)=-\frac{1+\la}{\la^2}\int_0^1\left(\om(\fa,t)\right)^{-2-\frac{1}{\la}}\om_t^2(\fa,t)\,d\fa
\end{align}
for $\la\neq 0$. Substituting \eqref{Ireduced2} in \eqref{omegaodereduced} yields \eqref{omegaodereduced2}.
\end{proof}
Now consider the auxiliary IVP   
\begin{equation}
\label{pje}
U_{xt}+UU_{xx}=\la U_x^2-(1+\la)\|U_x(\cdot,t)\|_2^2,
\end{equation}
with smooth  initial condition $U(x,0)=U_0(x)$, parameter $\la=\frac{1}{n-1},\,\,n\in\mathbb{Z}^+,\,\, n\geq 2$, and solutions satisfying either Dirichlet or periodic boundary conditions. 

Equation \eqref{pje} is obtained by imposing on the $n-$dimensional, incompressible Euler equations 
\begin{equation}
\label{Euler}
\bfU_t+\bfU\cdot\nabla\bfU=-\grad P,\qquad \nabla\cdot\bfU=0,
\end{equation}
the stagnation-point form velocity field (\cite{Saxton1})
\begin{equation}
\label{Euleransatz}
\bfU(x,\bfx',t)=\left(U(x,t),-\frac{\bfx'}{n-1}U_x(x,t)\right),\qquad \bfx'=(x_2,x_3,\cdots,x_n).
\end{equation}
Note that \eqref{pje} can also be obtained from \eqref{gmhd} by setting $b\equiv0$ and $u=U$. Thus, we may refer to 
\eqref{pje} as the Euler analogue of the $n-$dimensional MHD-related system \eqref{gmhd}.

If one now defines the Euler-associated Lagrangian trajectory $\beta$ by
$$\beta_t=U(\beta(\fa,t),t),\qquad \beta(\fa,0)=\fa,$$
then using the same procedure that led to \eqref{omegaode} on this equation instead, gives 
\begin{align}
\label{fullomegaode}
y_{tt}(\fa,t)+H(t)y(\fa,t)=0,\qquad y(\fa,0)=1,\,\, y_t(\fa,0)=-\la U_0'(\fa),
\end{align}
for $\la\in\R\backslash\{0\}$, 
\begin{align}
\label{Ieuler}
H(t)=-\frac{1+\la}{\la}\int_0^1\left(y(\fa,t)\right)^{-2-\frac{1}{\la}}y_t^2(\fa,t)\,d\fa,
\end{align}
and     
\begin{align}
\label{eulerjac}
y(\fa,t)=\left(\beta_{\fa}(\fa,t)\right)^{-\la}=e^{-\la\int_0^tU_x(\beta(\fa,s),s)\,ds}.
\end{align}
Using a reduction of order argument and the conservation-of-mean property 
\begin{align}
\label{4}
\int_0^1\beta_{\fa}(\fa,t)\,d\fa\equiv1
\end{align}
for the corresponding Euler-related Jacobian $\beta_{\fa}$, the authors in \cite{Sarria0} derived the following general solution to the IVP \eqref{fullomegaode} (the reader may refer to \cite{Sarria0} for details).\footnote{The property \eqref{4} is a consequence of the uniqueness of solution to \eqref{lagrangian} and either Dirichlet or periodic boundary conditions; it is also a property possessed by the Jacobian $\ga_{\fa}$ in \eqref{jacobian}.} 

Set
\begin{align}
\label{phi0}  
y(\fa,t)=\phi(t)J(\fa,t),\quad J(\fa,t)=1-\la\tau(t)U_0'(\fa)
\end{align}
for 
\begin{align}
\label{phi01}  
\phi(t)=\left(\int_0^1\frac{d\fa}{J(\fa,t)^{1/\la}}\right)^{\la},\quad\la\neq 0
\end{align}
and the auxiliary time variable $\tau(\cdot)$ satisfying the IVP
\begin{align}
\label{blowtime0}  
\tau'(t)=\phi(t)^{-2},\quad \tau(0)=0.
\end{align}
It is not difficult to check that the formulae \eqref{phi0}-\eqref{blowtime0} solves the IVP \eqref{fullomegaode} for any fixed $\fa\in[0,1]$. Indeed, a straightforward computation shows that, for $\la\neq0$,  
\begin{align}
\label{phi1}  
\frac{\phi''(t)}{\phi(t)}=\frac{1+\la}{\la}\int_0^1{\left(y(\fa,t)\right)^{-2-\frac{1}{\la}}y_t^2(\fa,t)\,d\fa},
\end{align}
and so    
\begin{equation}
\label{ah}  
\begin{split}
\frac{y_{tt}(\fa,t)}{y(\fa,t)}&=\frac{2\phi'(t)J_t(\fa,t)}{\phi(t)J(\fa,t)}
+\frac{J_{tt}(\fa,t)}{J(\fa,t)}
+\frac{\phi''(t)}{\phi(t)}
\\
&=
-\frac{2\la U_0'(\fa)\phi^{-3}(t)\phi'(t)}{J(\fa,t)}+\frac{2\la U_0'(\fa)\phi^{-3}(t)\phi'(t)}{J(\fa,t)}+\frac{1+\la}{\la}\int_0^1{\left(y(\fa,t)\right)^{-2-\frac{1}{\la}}y_t^2(\fa,t)\,d\fa}
\\
&
=\frac{1+\la}{\la}\int_0^1{\left(y(\fa,t)\right)^{-2-\frac{1}{\la}}y_t^2(\fa,t)\,d\fa}
\end{split}
\end{equation}
Moreover, $y(\fa,0)=1$ and $y_t(\fa,0)=-\la U_0'(\fa)$. Formulas for $U_x(\beta(\fa,t),t)$ and $\beta_{\fa}(\fa,t)$ can be obtained from \eqref{eulerjac}, and \eqref{phi0}-\eqref{blowtime}.

At this point, we return to our IVP \eqref{omegaodereduced2}-\eqref{cond2} and show its connection to the auxiliary IVP \eqref{fullomegaode} and its general solution \eqref{phi0}-\eqref{blowtime0}. Setting $\fa=\fa_0$ in \eqref{fullomegaode} for some $\fa_0\in[0,1]$ and choosing $U_0(x)=u_0(x)$, we see that $\om(t)=\om(\fa_0,t)$, the solution of our IVP \eqref{omegaodereduced2}-\eqref{cond2}, and $y(t)=y(\fa_0,t)$, the solution of the auxiliary IVP \eqref{fullomegaode} with $\fa=\fa_0$, both solve the same ODE
\begin{align}
\label{gen0}
g''(t)+G(t)g(t)=0,
\end{align}
for $\la\neq0$ and $G(t)$ given by
\begin{align}
\label{gen1}
G(t)=-\frac{1+\la}{\la}\left(\int_0^1\left(g(\fa,t)\right)^{-2-\frac{1}{\la}}g_t^2(\fa,t)\,d\fa\right),
\end{align}
with the same initial condition 
\begin{align}
\label{gen2}
g(0)=1,\quad g'(0)=a
\end{align}
for $a=-\la u_0'(\fa_0)=-\la U_0'(\fa_0)$. By uniqueness of solution to \eqref{gen0}-\eqref{gen2}, we must have that $\om(t)=y(\fa_0,t)$. Thus, we have established the following proposition.
\begin{proposition} 
\label{Eulersolution}
Consider the system \eqref{gmhd} with the Dirichlet boundary condition \eqref{dbc} or the periodic boundary condition \eqref{pbc}, smooth initial data $(u_0(x),b_0(x))$ and $(\la,\ka)\in\R\backslash\{0\}\times\{-\la\}$. Suppose $\fa=\fa_0$ is a zero of order $m\geq 2$ of $b_0(\fa)$, i.e., at least $b_0(\fa)$ and $b_0'(\fa)$ vanish at $\fa=\fa_0$, and set 
\begin{align}
\label{Lfinal}
\ebarl_i(t)=\int_0^1{\el_i(\alpha, t)\,d\alpha},\qquad\el_i(\alpha, t)=\frac{1}{J(\alpha,t)^{i+\frac{1}{\la}}},\qquad J(\fa,t)=1-\la\tau(t)u_0'(\fa)
\end{align}
for $i=0,1,$ and $\tau(\cdot)$ the solution of the IVP
\begin{align}
\label{blowtime}  
\tau'(t)=\left(\int_0^1\frac{d\fa}{J(\fa,t)^{1/\la}}\right)^{-2\la},\quad \tau(0)=0.
\end{align}
Then 
\begin{align}
\label{omegafinal1}
\om(t)=\left(\ebarl_0(t)\right)^{\la}J(\fa_0,t),\qquad 
\om(\fa,t)=\left(\ebarl_0(t)\right)^{\la}J(\fa,t)
\end{align}
is the general solution of \eqref{omegaodereduced2}-\eqref{cond2}. Moreover,
\begin{equation}
\label{eulerjacandu_xfinal}
\ga_{\fa}(\fa_0,t)=\el_0(\fa_0,t)/\ebarl_0(t),\qquad u_x(\ga(\fa_0,t),t)=\frac{\left(\ebarl_0(t)\right)^{-2\mu}}{\la\tau(t)}\left(\frac{1}{J(\fa_0,t)}-\frac{\ebarl_1(t)}{\ebarl_0(t)}\right).
\end{equation}
\end{proposition}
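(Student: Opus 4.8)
The plan is to obtain all four displayed formulas as bookkeeping consequences of the identification $\om(t)=y(\fa_0,t)$ that was just established from uniqueness of solutions to \eqref{gen0}--\eqref{gen2}, where $y(\fa,t)=\phi(t)J(\fa,t)$ is the closed-form solution \eqref{phi0}--\eqref{blowtime0} of the auxiliary Euler IVP \eqref{fullomegaode} with $U_0=u_0$. The only genuinely analytic content—that the ansatz \eqref{phi0}--\eqref{blowtime0} reproduces the nonlocal coefficient of \eqref{fullomegaode}—is exactly the self-consistency computation already carried out in \eqref{phi1}--\eqref{ah}, so nothing new is required there. What remains is to re-express that solution in the $\ebarl_i$-notation of \eqref{Lfinal} and to read off $\gafa$ and $u_x$.

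First I would dispose of the two identities in \eqref{omegafinal1} and the rewriting \eqref{blowtime} by pure notation. Since $\el_0(\fa,t)=J(\fa,t)^{-1/\la}$, the definition \eqref{phi01} reads $\phi(t)=\left(\int_0^1\el_0(\fa,t)\,d\fa\right)^{\la}=\big(\ebarl_0(t)\big)^{\la}$, whereupon \eqref{blowtime0} becomes $\tau'(t)=\phi(t)^{-2}=\big(\ebarl_0(t)\big)^{-2\la}$, i.e. \eqref{blowtime}; substituting $\phi=\big(\ebarl_0\big)^{\la}$ into $y(\fa,t)=\phi(t)J(\fa,t)$ and using $\om(\fa,t)=y(\fa,t)$, $\om(t)=y(\fa_0,t)$ gives both formulas of \eqref{omegafinal1} simultaneously. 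Next, inverting \eqref{omega} yields $\gafa(\fa,t)=\om(\fa,t)^{-1/\la}=\big(\ebarl_0(t)\big)^{-1}J(\fa,t)^{-1/\la}=\el_0(\fa,t)/\ebarl_0(t)$, and evaluating at $\fa=\fa_0$ produces the first formula in \eqref{eulerjacandu_xfinal}.

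The one computational step is the extraction of $u_x(\ga(\fa_0,t),t)$ from \eqref{omega'}, which rearranges to $u_x(\ga(\fa_0,t),t)=-\tfrac1\la\,\partial_t\log\om(\fa_0,t)=-\tfrac1\la\big(\la\,\ebarl_0'/\ebarl_0+J_t(\fa_0,t)/J(\fa_0,t)\big)$. The key manoeuvre is to differentiate $\ebarl_0=\int_0^1 J^{-1/\la}\,d\fa$ under the integral sign, using $\partial_tJ=-\la\tau'u_0'(\fa)$ and then eliminating $u_0'(\fa)$ via the algebraic identity $u_0'(\fa)=(1-J(\fa,t))/(\la\tau(t))$ read off from $J=1-\la\tau u_0'$. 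This collapses $\int_0^1 u_0'(\fa)J^{-1/\la-1}\,d\fa$ into $\tfrac{1}{\la\tau}(\ebarl_1-\ebarl_0)$, giving $\ebarl_0'=\tfrac{\tau'}{\la\tau}(\ebarl_1-\ebarl_0)$, while the same substitution turns $J_t(\fa_0,t)/J(\fa_0,t)$ into $\tfrac{\tau'}{\tau}\big(1-1/J(\fa_0,t)\big)$. Adding the two contributions, the stray constants cancel and one is left with $\partial_t\log\om(\fa_0,t)=\tfrac{\tau'}{\tau}\big(\ebarl_1/\ebarl_0-1/J(\fa_0,t)\big)$; inserting $\tau'=\big(\ebarl_0\big)^{-2\la}$ then gives the second formula in \eqref{eulerjacandu_xfinal} (the exponent there should read $-2\la$, i.e. $\mu=\la$).

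The conceptually delicate ingredient is not any of the above algebra but the identification $\om(t)=y(\fa_0,t)$ on which everything rests: it requires reading the IVP \eqref{omegaodereduced2}--\eqref{cond2} and the $\fa=\fa_0$ slice of \eqref{fullomegaode} as one and the same nonlocal problem \eqref{gen0}--\eqref{gen2} and then invoking uniqueness. Granting that identification (as in the preceding discussion), the remaining obstacle is purely the differentiation-under-the-integral bookkeeping of the previous paragraph, whose only trap is the behaviour as $\tau\to0^+$ (equivalently $t\to0^+$), where the identity $u_0'=(1-J)/(\la\tau)$ and the factor $1/\tau$ in the $u_x$ formula are each of indeterminate form; these must be reconciled as a limit consistent with the initial data \eqref{cond2}, after which one recovers $u_x(\ga(\fa_0,0),0)=u_0'(\fa_0)$ as expected.
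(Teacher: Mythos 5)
Your proposal is correct and takes essentially the same route as the paper: the proposition is read off from the identification $\om(t)=y(\fa_0,t)$ via uniqueness for \eqref{gen0}--\eqref{gen2} together with the notational rewriting $\phi(t)=\big(\ebarl_0(t)\big)^{\la}$ of \eqref{phi0}--\eqref{blowtime0}, and you additionally carry out explicitly the differentiation-under-the-integral computation yielding $\ebarl_0'=\tfrac{\tau'}{\la\tau}\big(\ebarl_1-\ebarl_0\big)$ and hence the $u_x$ formula, details the paper defers to \cite{Sarria0}. Your remark that the exponent $-2\mu$ in \eqref{eulerjacandu_xfinal} should read $-2\la$ is also correct; it is a typo in the statement.
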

From \eqref{Lfinal} and \eqref{eulerjacandu_xfinal}-ii), note that finite-time blowup of $u_x(\ga(\fa_0,t),t)$ will depend on the interplay between the nonlocal terms $\ebarl_0(t)$ and $\ebarl_1(t)$, with $1/J(\fa_0,t)$. Set
\begin{equation}
\label{eta^*}
\begin{split}
\tau^*=
\begin{cases}
\frac{1}{\lambda M_0},\,\,\,\,\,\,\,&\lambda>0,
\\
\frac{1}{\lambda m_0},\,\,\,\,\,\,\,&\lambda<0,
\end{cases}
\end{split}
\end{equation}
for $M_0>0$ and $m_0<0$, the absolute maximum and absolute minimum, respectively, of $u_0'(\fa)$ in $[0,1]$. Then, as $\tau\nearrow \tau_*,$ the term $1/J(\fa_0,t)$ in \eqref{eulerjacandu_xfinal}-ii) will diverge for certain choices of $\alpha_0$ and not at all for others. Specifically, if $\lambda>0$ and $\fa_0$ is such that such that $u_0'(\fa_0)=M_0$, then   
\begin{equation*}
\begin{split}
J(\fa_0,t)^{-1}=(1-\lambda M_0\tau(t))^{-1}\to+\infty\,\,\,\,\,\,\,\text{as}\,\,\,\,\,\,\,\tau\nearrow\tau^*=\frac{1}{\lambda M_0}.
\end{split}
\end{equation*}
Similarly, if $\lambda<0$ and $\fa_0$ is such that $u_0'(\fa_0)=m_0$, then 
\begin{equation*}
\begin{split}
J(\alpha_0,t)^{-1}=(1-\lambda m_0\tau(t))^{-1}\to+\infty\,\,\,\,\,\,\,\text{as}\,\,\,\,\,\,\,\tau\nearrow\tau^*=\frac{1}{\lambda m_0}.
\end{split}
\end{equation*}
If instead $\fa_0$ is such that $u_0'(\fa_0)\neq M_0$ when $\la>0$, or $u_0'(\fa_0)\neq m_0$ for $\la<0$, then $J(\alpha_0,t)$ will remain positive and finite as $\tau\nearrow \tau^*$. However, this does not necessarily imply that $u_x(\ga(\fa_0,t),t)$ in \eqref{eulerjacandu_xfinal}-ii) stays defined as $\tau\nearrow\tau^*$ as one must still estimate the behavior of the integral terms in \eqref{eulerjacandu_xfinal}-ii). Lastly, from \eqref{blowtime} we see that the existence of a potential finite time $t_*>0$ such that
$$\lim_{t\nearrow t^*}\tau(t)=\tau^*$$ 
will depend, in turn, upon the existence of a positive, finite limit
\begin{equation}
\label{t^*}
\begin{split}
t_*\equiv\lim_{\tau\nearrow\tau_*}\int_0^{\tau}{\left(\int_0^1{\frac{d\alpha}{(1-\lambda\mu u_0^\prime(\alpha))^{\frac{1}{\lambda}}}}\right)^{2\lambda}\,d\mu}.
\end{split}
\end{equation} 
Although we could follow a more general argument to obtain blowup criteria for a relatively large class of locally Holder arbitrary initial data $u_0$ and smooth $b_0$, we instead settle for a simple choice of $u_0(\fa)$ to simplify the presentation of our results. Nonetheless, it should be clear how the argument presented below for this particular choice of $u_0$ can be extended to more general classes of smooth initial data. 

Let us consider the case $\la>0, \ka=-\la$, under the Dirichlet boundary condition \eqref{dbc}. The case $\la<0$ with Dirichlet or periodic boundary conditions follows analogously. 

Let 
\begin{equation}
\label{data}
\begin{split}
u_0(\fa)=\fa(1-\fa),\qquad0\leq\fa\leq 1,
\end{split}
\end{equation}
for $\la>0$. Then $u_0'(\fa)=1-2\fa$ attains its absolute maximum $M_0=1$ at $\fa=0$ and, by \eqref{eta^*}, we see that $\tau^*=1/\la$. Evaluating the integral term $\ebarl_0(t)$ in \eqref{Lfinal}, we obtain
\begin{equation}
\label{L_01}
\begin{split}
\ebarl_0(t)=\frac{(1-\la^2\tau(t)^2)^{-\frac{1}{\la}}}{2(\la-1)\tau(t)}\left[(\la\tau(t)-1)\left(1+\la\tau(t)\right)^{\frac{1}{\la}}+(1+\la\tau(t))\left(1-\la\tau(t)\right)^{\frac{1}{\la}}\right]
\end{split}
\end{equation}
for $0\leq \tau<\tau^*$ and $\la\in\R\backslash\{0,1\}$, while, for $\la=1$,
\begin{equation}
\label{L_02}
\begin{split}
\ebarl_0(t)=\frac{1}{2\tau(t)}\left[\ln\left(\frac{1}{\tau(t)}+1\right)-\ln\left(\frac{1}{\tau(t)}-1\right)\right].
\end{split}
\end{equation}
Note that since $\tau(0)=0$, the above formulas are defined at $t=0$ in the limit as $\tau\searrow 0$. Using \eqref{eulerjacandu_xfinal}-i), \eqref{L_01} and \eqref{L_02}, we see that for $\fa_0=0$,
\begin{equation}
\label{jacblow1}
\gafa(0,t)=\frac{2(\la-1)\tau(t)(1+\la\tau)^{1/\la}}{(1-\la\tau)\left((1+\la\tau)(1-\la\tau)^{\frac{1}{\la}-1}-(1+\la\tau)^{1/\la}\right)}
\end{equation}
for $\la\in\R^+\backslash\{1\}$, whereas
\begin{equation}
\label{jacblow2}
\gafa(0,t)=\frac{2}{(1-\tau)(\ln(2)-\ln(1-\tau))}
\end{equation}
for $\la=1$. Then, for $\tau^*-\tau>0$ small,  
\begin{equation}
\label{jacblow3}
\gafa(0,t)\sim
\begin{cases}
C(\tau^*-\tau)^{-1},\qquad &0<\la<1,
\\
C(\tau^*-\tau)^{-\frac{1}{\la}},\qquad &\la>1,
\\
-\frac{C}{(1-\tau)\ln(1-\tau)},\qquad &\la=1
\end{cases}
\end{equation}
for $C$ a generic positive constant that may change value from line to line. This implies that   
\begin{equation}
\label{jacblow4}
\lim_{\tau\nearrow\tau^*}\gafa(0,t)=\infty
\end{equation}
for $\la>0$, so that  
\begin{equation}
\label{jacblow5}
\lim_{\tau\nearrow\tau^*}\int_0^{t}{u_x(0,s)\,ds}=\infty
\end{equation}
by formula \eqref{jacobian} for the Jacobian. In \eqref{jacblow5}, we used the result that $\ga(0,t)\equiv0$ under Dirichlet boundary conditions for as long as $u$ is defined, which follows from uniqueness of solution to \eqref{lagrangian} and the Dirichlet boundary conditions. 

The limits \eqref{jacblow4}-\eqref{jacblow5} do not necessarily imply finite-time blowup of solutions to \eqref{gmhd}; we still need to determine the existence of a finite blowup time $t^*>0$, defined by \eqref{t^*}, for which 
$$\lim_{t\nearrow t^*}\tau(t)=\tau^*.$$ 

For $\tau^*-\tau>0$ small, \eqref{L_01} and \eqref{L_02} imply that 
\begin{equation}
\label{Lestimates}
\ebarl_0(t)\sim
\begin{cases}
C(\tau^*-\tau)^{1-\frac{1}{\la}},\qquad &0<\la<1,
\\
C,\qquad &\la>1,
\\
-C\ln(1-\tau),\qquad &\la=1,
\end{cases}
\end{equation}
so that, as $\tau\nearrow\tau^*$, $\ebarl_0(t)\to\infty$ for $0<\la\leq 1$, whereas $\ebarl_0(t)$ converges to a positive constant when $\la>1$. Using the above on \eqref{t^*}, we see that the blowup time $t^*$, defined by
$$t^*=\lim_{\tau\nearrow\tau^*}t(\tau)$$
for $t(\tau)$ as in \eqref{t^*}, is positive and finite for $\la>1$, while for $0<\la\leq 1$,
\begin{equation}
\label{time1}
t^*\sim\lim_{\tau\nearrow \tau^*}
\begin{cases}
C_1+\frac{C_2}{2\la-1}-\frac{C_3}{2\la-1}(\tau^*-\tau)^{2\la-1},\qquad &\la\in(0,1)\backslash\{1/2\},
\\
C_1-C_2\ln(\tau^*-\tau),\qquad &\la=1/2,
\\
C_1+(1-\tau)(C_2-\ln(1-\tau)+\ln^2(1-\tau)),\qquad &\la=1,\, \tau^*=1,
\end{cases}
\end{equation}
for $C_i,\, i=1,2,3,$ positive constants. The estimates in \eqref{time1} imply that the blowup time $t^*$ is positive and finite for $1/2<\la\leq 1$, but $t^*=+\infty$ for $0<\la\leq 1/2$. Putting these results together with \eqref{jacblow5}, we conclude that for $\la>1/2$, there exists a finite time $t^*$ for which 
\begin{equation}
\label{jacblow5.1}
\lim_{t\nearrow t^*}\int_0^{t}{u_x(0,s)\,ds}=\infty,
\end{equation}
whereas, for $0<\la\leq 1/2$, the limit in \eqref{jacblow5.1} exists for all $t>0$. In fact, \eqref{eulerjacandu_xfinal}-ii) implies that $u_x(0,t)$ stays defined for all time for $0<\la\leq 1/2$. Since the choice of $\fa_0=0$ is also tied to our choice of initial data $b_0(\fa)$, we can summarize the result above as follows: 

For $\la>1/2$, $u_0(\fa)=\ga(1-\fa)$, and smooth $b(x,0)=b_0(\fa)$ with a zero of order $m\geq 2$ at $\fa=0$, there exists a finite time $t^*>0$ such that both $u_x(0,t)$ and $\gafa(0,t)$ diverge to $\infty$ as $t\nearrow t^*$. In contrast, if $0<\la\leq 1/2$, then both $u_x(0,t)$ and $\gafa(0,t)$ exist for all $t\geq 0$. 

Next, for $\la>0$, what if $b_0(\fa)$ has a zero of order $m\geq 2$ at $\fa=\fa_0$ but $\fa_0\neq 0$, so that $u_0'(\fa_0)$ is not equal to its absolute maximum $M_0=1$? If so, then the term $\el_0(\fa_0,t)$ in \eqref{eulerjacandu_xfinal}-i) will stay positive and bounded as $\tau\nearrow\tau^*$. Then, \eqref{eulerjacandu_xfinal}-i) and \eqref{Lestimates} imply that $\gafa(\fa_0,t)\to0$ as $t\nearrow t^*$ for $0<\la\leq 1$, but $\gafa(\fa_0,t)\to C$ for $\la>1$. Let's consider the case $0<\la\leq 1$ first. 

Since $\gafa(\fa_0,t)\to0$ for $0<\la\leq 1$ and $\fa_0\neq 0$, \eqref{jacobian} implies that 
\begin{equation}
\label{jacblow6}
\lim_{t\nearrow t^*}\int_0^{t}{u_x(\ga(\fa_0,s),s)\,ds}=-\infty.
\end{equation}
Since $t^*=\infty$ for $0<\la\leq 1/2$, but $0<t^*<\infty$ for $1/2<\la\leq 1$, we conclude that for $\fa_0\neq 0$ and $1/2<\la\leq 1$, there is a finite time $t^*>0$ such that \eqref{jacblow6} holds, whereas, for $0<\la\leq 1/2$, the time integral in \eqref{jacblow6}, and actually $u_x(\ga(\fa_0,t),t)$ itself, are defined for all $t>0$. 

Next, for $\fa_0\neq 0$ and $\la>1$, $\gafa(\fa_0,t)\to C$ as $t$ approaches the finite blowup time $t^*$ that we just established for $\gafa(0,t)$. Recalling \eqref{jacobian}, note that this does not necessarily imply that $u_x(\ga(\fa_0,t),t)$ stays finite as $t\nearrow t^*$ for $\fa_0\neq 0$. Indeed, a potential singularity in $u_x(\ga(\fa_0,t),t)$ for $\fa_0\neq 0$ could simply be time-integrable, which is actually the case here as we now show. Evaluating the time integral $\ebarl_1(t)$ in \eqref{eulerjacandu_xfinal}-ii), we obtain
\begin{equation}
\label{L_1}
\begin{split}
\ebarl_1(t)=\frac{1}{2\tau(t)}\left((1-\la\tau(t))^{-\frac{1}{\la}}-(1+\la\tau(t))^{-\frac{1}{\la}}\right)
\end{split}
\end{equation}
for $\la\neq0$. Then for $\tau^*-\tau>0$ small and $\tau^*=1/\la$, we see that
\begin{equation}
\label{L_1again}
\begin{split}
\ebarl_1(t)\sim C(\tau^*-\tau)^{-1/\la}.
\end{split}
\end{equation}
Applying the estimates \eqref{L_1again} and \eqref{Lestimates} on \eqref{eulerjacandu_xfinal}-ii) implies that for $\fa_0\neq0$ and $\la>1$,
\begin{equation}
\label{u2}
\begin{split}
u_x(\ga(\fa_0,t),t)\sim-\frac{C}{(\tau^*-\tau)^{1/\la}}
\end{split}
\end{equation}
for $\tau^*-\tau>0$ small. Consequently, 
\begin{equation}
\label{u3}
\begin{split}
\lim_{\tau\nearrow\tau^*}u_x(\ga(\fa_0,t),t)=-\infty.
\end{split}
\end{equation}
Since we already established the existence of a finite time $t^*>0$ such that $\lim_{t\nearrow t^*}\tau(t)=\tau^*$ for $\la>1$, this proves finite-time blowup of $u_x(\ga(\fa_0,t),t)$ for $\la>1$ and $\fa_0\neq 0$ a zero of order $m\geq 2$ of $b_0(\fa)$.

Lastly, the behavior of $b(x,t)$ for $\la>0$ and $\ka=-\la$ follows directly from the results above and equation \eqref{jacorder} in Lemma \eqref{lemma:order}. Since we are assuming $b_0(\fa)$ has a zero of order $m\geq2$ at $\fa=\fa_0$, the Lemma implies that the first $m-1$ spatial derivatives of $b(x,t)$ along $\ga(\fa_0,t)$ are zero for as long as $u_x(\ga(\fa_0,t),t)$ is defined. As for the $m_{\text{th}}$ spatial partial of $b(x,t)$ along $\ga(\fa_0,t)$, \eqref{jacorder} and $\ka=-\la$ imply that   
\begin{equation}
\label{B1}
\frac{\partial^m b}{\partial x^m}(x,t)\circ\ga(\fa_0,t)=b^{(m)}(\fa_0)\cdot\left(\ga_{\fa}(\fa_0,t)\right)^{-\la-m}
\end{equation}
for $b^{(m)}(\fa_0)\neq 0$. If $\fa_0=0$ and $\la>0$, the limit  \eqref{jacblow4} and \eqref{B1} imply that  
\begin{equation}
\label{B3}
\lim_{t\nearrow t^*}\frac{\partial^m b}{\partial x^m}(0,t)=0
\end{equation}
where $t^*>0$ is the finite blowup time for $u_x(0,t)$ when $\la>1/2$, but $t^*=\infty$ if $0<\la\leq 1/2$. 

Similarly, using the previous results for $\gafa(\fa_0,t)$ for $\fa_0\neq 0$ and $\la>0$, we find that 
\begin{equation}
\label{B31}
\lim_{t\nearrow t^*}\bigg|\frac{\partial^m b}{\partial x^m}(x,t)\circ\ga(\fa_0,t)\bigg|=\infty
\end{equation}
where $t^*$ is positive and finite when $1/2<\la\leq 1$, but $t^*=\infty$ for $0<\la\leq 1/2$. The sign of the infinity in the blowup in \eqref{B3} is the same as the sign of $b^{(m)}(\fa_0)\neq 0$. 

The last scenario involves $\la>1$ and $\fa_0\neq 0$. In that case, we have that the Jacobian $\gafa(\fa_0,t)$ converges to a finite positive constant as $t$ approaches the finite blowup time $t^*>0$ for $u_x(\ga(\fa_0,t),t)$ in \eqref{u3}. Consequently, \eqref{B1} implies that
\begin{equation}
\label{B4}
\lim_{t\nearrow t^*}\bigg|\frac{\partial^m b}{\partial x^m}(x,t)\circ\ga(\fa_0,t)\bigg|=C
\end{equation}
for $C$ a positive constant. Theorem \ref{lambdapos} below summarizes our results so far for $\la>0$ and $\ka=-\la$. Results for the case $\la<0$ and $\ka=-\la$ are then stated in Theorem \ref{lambdaneg} without proof since the argument is similar to that used to establish Theorem \ref{lambdapos}.
\begin{theorem}
\label{lambdapos}
Consider \eqref{gmhd} with the Dirichlet boundary condition \eqref{dbc} and parameters $\la, \ka\in\R$ satisfying $\la+\ka=0$ for $\la>0$. Suppose $b(x,0)=b_0(x)$ is smooth and has a zero of order $m\geq 2$ at some fixed $\fa_0\in[0,1]$, i.e., at least $b_0(\fa)$ and $b_0'(\fa)$ vanish at $\fa=\fa_0$. Set $u_0(\fa)=\fa(1-\fa)$, so that $u_0'(\fa)=1-2\fa$ attains its absolute maximum $M_0=1$ in $[0,1]$ at $\fa=0$. Then
\begin{enumerate}
\item For $\la\in(0,1/2]$ and $\ka=-\la$, both  $u_x(\ga(\fa_0,t),t) $ and $b_x(\ga(\fa_0,t),t)$ exist globally in time.   
\item For $\la>1/2$ and $\ka=-\la$, if $\fa_0=0$, then there exists a finite time $t^*>0$ such that 
\begin{equation}
\label{t1}
\lim_{t\nearrow t^*}\int_0^tu_x(\ga(\fa_0,s),s)\,ds=\lim_{t\nearrow t^*}\int_0^tu_x(\fa_0,s)\,ds=\infty,
\end{equation}
while, if $\fa_0\neq0$, then
\begin{equation}
\label{t2}
\lim_{t\nearrow t^*}\int_0^tu_x(\ga(\fa_0,s),s)\,ds=-\infty. 
\end{equation}
\end{enumerate}
Additionally, let $t^*>0$ be the finite blowup time in \eqref{t1} and \eqref{t2} for $\la>1/2$ and $\ka=-\la$. Then 
\begin{enumerate}
\setcounter{enumi}{2}
\item The order $m\geq 2$ of the zero $\fa_0$ of $b_0(\fa)$ is preserved by $b(x,t)$ along the trajectory $\ga(\fa_0,t)$ for all $t\in[0,t^*]$.  
\item  For $\la>1/2$ and $\ka=-\la$, if $\fa_0=0$, then   
\begin{equation}
\label{t3}
\lim_{t\nearrow t^*}\frac{\partial^m b}{\partial x^m}(x,t)\circ\ga(\fa_0,t)=\lim_{t\nearrow t^*}\frac{\partial^m b}{\partial x^m}(\fa_0,t)=0.
\end{equation}
\item For $1/2<\la\leq 1$ and $\ka=-\la$, if $\fa_0\neq 0$, then 
\begin{equation}
\label{t4}
\lim_{t\nearrow t^*}\bigg|\frac{\partial^m b}{\partial x^m}(x,t)\circ\ga(\fa_0,t)\bigg|=\infty.
\end{equation}
\item For $\la>1$ and $\ka=-\la$, if $\fa_0\neq 0$, then
\begin{equation}
\label{t5}
\lim_{t\nearrow t^*}\bigg|\frac{\partial^m b}{\partial x^m}(x,t)\circ\ga(\fa_0,t)\bigg|=C
\end{equation}
for $C$ a nonzero constant. 
\end{enumerate}
\end{theorem}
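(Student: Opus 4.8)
The plan is to recognize that Theorem \ref{lambdapos} is essentially a bookkeeping statement that organizes the explicit solution analysis carried out above into a single place, so the proof will proceed by assembling the pieces already in hand rather than by introducing new machinery. The backbone is Proposition \ref{Eulersolution}: with $\ka=-\la$ and $\fa_0$ a zero of order $m\geq2$ of $b_0$, the coupling term in \eqref{omegaode} drops by Lemma \ref{lemma:order}, and the Jacobian and velocity gradient along $\ga(\fa_0,t)$ are given in closed form by \eqref{eulerjacandu_xfinal} in terms of the nonlocal quantities $\ebarl_0(t)$, $\ebarl_1(t)$ and the profile $J(\fa_0,t)=1-\la\tau(t)u_0'(\fa_0)$. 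The entire argument then reduces to tracking these quantities as the auxiliary time $\tau$ increases to $\tau^*=1/\la$ (via \eqref{eta^*} with $M_0=1$ for the choice $u_0(\fa)=\fa(1-\fa)$), and to translating the resulting $\tau$-asymptotics back into the physical time $t$.

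For the velocity statements I would split on the location of $\fa_0$. When $\fa_0=0$, so that $u_0'(\fa_0)=M_0=1$, the factor $J(0,t)^{-1}=(1-\la\tau)^{-1}$ diverges, and the explicit evaluations \eqref{L_01}--\eqref{L_02} feed into \eqref{eulerjacandu_xfinal}-i) to yield the asymptotics \eqref{jacblow3}, hence $\gafa(0,t)\to\infty$ in \eqref{jacblow4} and the divergence \eqref{jacblow5} of $\int_0^t u_x(0,s)\,ds$ to $+\infty$; here I would also invoke $\ga(0,t)\equiv0$ under \eqref{dbc}. When $\fa_0\neq0$, the factor $J(\fa_0,t)$ and the local density $\el_0(\fa_0,t)$ stay positive and bounded, so the behavior is dictated by $\ebarl_0(t)$ through the estimates \eqref{Lestimates}: for $0<\la\leq1$ this forces $\gafa(\fa_0,t)\to0$ and the divergence \eqref{jacblow6} to $-\infty$, whereas for $\la>1$ the Jacobian stays bounded and one must instead estimate \eqref{eulerjacandu_xfinal}-ii) directly, using \eqref{L_1}--\eqref{L_1again} to obtain \eqref{u2}--\eqref{u3}.

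The step I expect to be the crux, and the one that actually decides the global-versus-blowup dichotomy between Parts (1) and (2), is the passage from $\tau$ to $t$, i.e. deciding whether the limit $\tau\nearrow\tau^*$ is reached in finite physical time. This amounts to evaluating the integral \eqref{t^*} with the help of the $\ebarl_0$-asymptotics \eqref{Lestimates}, producing the estimates \eqref{time1}; the delicate point is the borderline integrability that makes $t^*<\infty$ precisely when $\la>1/2$ and $t^*=+\infty$ when $0<\la\leq1/2$. Granting this, Part (1) follows because for $\la\in(0,1/2]$ the singular profiles are never reached in finite time and \eqref{eulerjacandu_xfinal}-ii) keeps $u_x(\ga(\fa_0,t),t)$ (and, via \eqref{B1}, $b_x$) defined for all $t$, while Part (2) collects \eqref{jacblow5}, \eqref{jacblow6} and \eqref{u3} over the relevant $\la$-ranges.

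Finally, the magnetic-field conclusions require essentially no new work: order preservation is exactly Lemma \ref{lemma:order}, and the limits \eqref{t3}, \eqref{t4}, \eqref{t5} follow by inserting the three Jacobian behaviors into formula \eqref{B1} (which is \eqref{jacorder} specialized to $\ka=-\la$), together with $b_0^{(m)}(\fa_0)\neq0$ and the exponent $-\la-m<0$. Concretely, $\gafa(0,t)\to\infty$ gives the vanishing \eqref{t3}, $\gafa(\fa_0,t)\to0$ gives the blowup \eqref{t4}, and $\gafa(\fa_0,t)\to C>0$ gives the finite nonzero limit \eqref{t5}, which completes the assembly.
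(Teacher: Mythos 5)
Your proposal is correct and follows essentially the same route as the paper: the theorem is indeed proved by assembling Proposition \ref{Eulersolution} with Lemma \ref{lemma:order}, the explicit evaluations \eqref{L_01}--\eqref{L_02} and \eqref{L_1}, the asymptotics \eqref{jacblow3}, \eqref{Lestimates}, \eqref{L_1again}, \eqref{u2}, and the $\tau$-to-$t$ conversion \eqref{t^*}--\eqref{time1}, with the magnetic-field statements read off from \eqref{B1}. You also correctly identified the finiteness of $t^*$ (finite precisely for $\la>1/2$, via the borderline integrability in \eqref{time1}) as the decisive step separating Parts (1) and (2), which is exactly where the paper's dichotomy comes from.
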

\begin{theorem}
\label{lambdaneg}
Consider \eqref{gmhd} with Dirichlet boundary conditions \eqref{dbc} and parameters $\la, \ka\in\R$ satisfying $\la+\ka=0$ for $\la<0$. Suppose $b(x,0)=b_0(x)$ is smooth and has a zero of order $m\geq 2$ at some fixed $\fa_0\in[0,1]$, i.e., at least $b_0(\fa)$ and $b_0'(\fa)$ vanish at $\fa=\fa_0$. Set $u_0(\fa)=\fa(1-\fa)$, so that $u_0'(\fa)=1-2\fa$ attains its absolute minimum $m_0=-1$ in $[0,1]$ at $\fa=1$. Lastly, set
\begin{equation}
\label{min}
m(t)=\min_{\fa\in[0,1]}u_x(\ga(\fa,t),t).
\end{equation}
Then $m(t)=u_x(\ga(1,t),t)=u_x(1,t)$, and
\begin{enumerate}
\item For $\la<0$ and $\ka=-\la$, 
\begin{enumerate}
\item if $\fa_0=1$, then there exists a finite $t^*>0$ such that  
\begin{equation}
\label{min2}
\lim_{t\nearrow t^*}\int_0^tm(s)\,ds=-\infty.
\end{equation}
\item if $\fa_0\neq1$, then $u_x(\ga(\fa_0,t),t)$ is defined for all $0\leq t\leq t^*$.
\end{enumerate}
\end{enumerate}

Additionally, let $t^*$ be the finite blowup time in \eqref{min2}. Then 
\begin{enumerate}
\setcounter{enumi}{1}
\item The order $m\geq 2$ of the zero $\fa_0$ of $b_0(\fa)$ is preserved by $b(x,t)$ along the trajectory $\ga(\fa_0,t)$ for all $t\in[0,t^*]$. 

\item For $\la<0$ and $\ka=-\la$,

\begin{enumerate}
\item if $\la>-m$ and $\fa_0=1$, then  
\begin{equation}
\label{bb1}
\lim_{t\nearrow t^*}\bigg|\frac{\partial^m b}{\partial x^m}(\fa_0,t)\bigg|=\infty.
\end{equation}
\item if $\la<-m$ and $\fa_0=1$, then
\begin{equation}
\label{bb2}
\lim_{t\nearrow t^*}\frac{\partial^m b}{\partial x^m}(\fa_0,t)=0.
\end{equation}
\item if $\la=-m$ and $\fa_0=1$, then 
\begin{equation}
\label{bb3}
\frac{\partial^m b}{\partial x^m}(x,t)\circ\ga(\fa_0,t)=b_0^{(m)}(\fa_0)\neq 0
\end{equation}
for all $0\leq t\leq t^*$.
\item if $\fa_0\neq 1$, then 
\begin{equation}
\label{bb4}
\lim_{t\nearrow t^*}\frac{\partial^m b}{\partial x^m}(x,t)\circ\ga(\fa_0,t)=Cb_0^{(m)}(\fa_0)\neq 0
\end{equation}
for $C$ a positive constant.
\end{enumerate} 
\end{enumerate}
\end{theorem}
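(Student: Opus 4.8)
The plan is to run the argument of Theorem \ref{lambdapos} essentially verbatim with the sign of $\la$ reversed, since the solution machinery of Proposition \ref{Eulersolution} is insensitive to $\mathrm{sgn}\,\la$. Because $\ka=-\la$ and $b_0$ has a zero of order $m\geq2$ at $\fa_0$, Lemma \ref{lemma:order} kills the magnetic contribution $(\ka bb_{xx}-\la b_x^2)\circ\ga(\fa_0,t)$, so along $\ga(\fa_0,t)$ the dynamics again reduce to the Euler-type IVP and the explicit formulas \eqref{eulerjacandu_xfinal} hold with the same $\ebarl_0,\ebarl_1,J$ from \eqref{Lfinal}. The only structural change is that $u_0'(\fa)=1-2\fa$ now attains its \emph{absolute minimum} $m_0=-1$ at $\fa=1$, so by \eqref{eta^*} the critical auxiliary time is $\tau^*=1/(\la m_0)=-1/\la>0$, at which $J(1,t)=1+\la\tau(t)\searrow0$.

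First I would identify the minimizing trajectory. Since $u(1,t)=0$ under \eqref{dbc}, uniqueness of the flow gives $\ga(1,t)\equiv1$, and because the prefactor in \eqref{eulerjacandu_xfinal}-ii) has the sign of $1/(\la\tau)<0$ for $\la<0$, the map $\fa\mapsto u_x(\ga(\fa,t),t)$ is minimized exactly where $1/J(\fa,t)$ is maximized, i.e. where $J(\fa,t)=1-\la\tau u_0'(\fa)$ is smallest; this is the minimizer $\fa=1$ of $u_0'$. Hence $m(t)=u_x(\ga(1,t),t)=u_x(1,t)$. Evaluating $\ebarl_0,\ebarl_1$ for $u_0(\fa)=\fa(1-\fa)$ produces the same closed forms \eqref{L_01}, \eqref{L_02}, \eqref{L_1} now read for $\la<0$; from these I would extract the $\tau\nearrow\tau^*$ asymptotics and show $\gafa(1,t)=\el_0(1,t)/\ebarl_0(t)=J(1,t)^{-1/\la}/\ebarl_0(t)\to0$, since the exponent $-1/\la>0$ forces $J(1,t)^{-1/\la}\to0$ while $\ebarl_0(t)$ stays positive and finite. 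By \eqref{jacobian}, $\int_0^t m(s)\,ds=\ln\gafa(1,t)\to-\infty$, which is \eqref{min2}.

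The existence of a finite $t^*$ with $\tau(t^*)=\tau^*$ then follows from \eqref{t^*}/\eqref{blowtime}: writing $dt=\ebarl_0(\tau)^{2\la}\,d\tau$, I would show $\ebarl_0$ is bounded and bounded away from zero on $[0,\tau^*]$, so $t^*=\int_0^{\tau^*}\ebarl_0^{2\la}\,d\tau<\infty$; in contrast to the $\la>0$ regime, this holds for \emph{every} $\la<0$. For $\fa_0\neq1$ one has $u_0'(\fa_0)>m_0$, so $J(\fa_0,t)$ stays bounded away from $0$, both $\el_0(\fa_0,t)$ and the integrals $\ebarl_0,\ebarl_1$ stay finite, and \eqref{eulerjacandu_xfinal}-ii) keeps $u_x(\ga(\fa_0,t),t)$ defined on $[0,t^*]$, with $\gafa(\fa_0,t)$ tending to a positive constant; this is item (2).

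Finally the magnetic statements follow from \eqref{jacorder} with $\ka=-\la$, i.e. $\partial_x^m b\circ\ga(\fa_0,t)=b_0^{(m)}(\fa_0)\,\gafa^{-\la-m}(\fa_0,t)$. At $\fa_0=1$, $\gafa\to0$ and the sign of the exponent $-(\la+m)$ yields the three subcases: $\la>-m$ gives \eqref{bb1}, $\la<-m$ gives \eqref{bb2}, and $\la=-m$ gives the constant \eqref{bb3}; for $\fa_0\neq1$, $\gafa\to C>0$ gives \eqref{bb4}. I expect the main obstacle to be the uniform control of the nonlocal integrals $\ebarl_0$ and $\ebarl_1$ up to $\tau^*$: establishing that the exponents $-1/\la$ and $-1-1/\la$ keep these integrals finite and positive for all $\la<0$ is exactly what localizes the singularity to $\ga(1,t)$, forces $t^*$ to be finite for every $\la<0$, and drives the subcase split for $\partial_x^m b$. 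This is where the sign of $\la$ is decisive and where the present case genuinely diverges from Theorem \ref{lambdapos}.
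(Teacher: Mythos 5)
Your proposal is correct and is essentially the paper's intended argument: the paper states Theorem \ref{lambdaneg} without proof, deferring to the $\la>0$ case, and your transposition — Euler reduction along $\ga(\fa_0,t)$ via Lemma \ref{lemma:order} and Proposition \ref{Eulersolution}, $\tau^*=-1/\la$ with $J(1,t)\searrow0$, $\ebarl_0$ bounded above and away from zero on $[0,\tau^*]$ (which forces $t^*<\infty$ for \emph{every} $\la<0$ and $\gafa(1,t)\to0$), $\ebarl_1$ finite at $\tau^*$ for $\fa_0\neq1$, and the trichotomy in the exponent $-(\la+m)$ of $\gafa$ in \eqref{jacorder} — is exactly the analogue of the computations \eqref{L_01}--\eqref{u3}. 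Your concluding observation that the sign of $\la$ removes the $\la>1/2$ threshold seen in Theorem \ref{lambdapos} is also the correct point of divergence between the two regimes.
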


\begin{remark}
In more generality, it can be shown using a Taylor series argument that the results in Theorems \ref{lambdapos} and \ref{lambdaneg} will hold if $u_0(\fa)\in C^2([0,1])$ is such that $u_0''(\fa)\neq 0$ at the locations where $u_0'(\fa)$ attains its absolute maximum (if $\la>0$) or absolute minimum (if $\la<0$). Clearly that is the case for our present choice of $u_0(\fa)$ since $u_0''(\fa)=-2$. In the context of solutions of the form \eqref{spf} of the $n-$dimensional ideal MHD equations, this condition simply means that a non-vanishing initial ``vorticity'' $u_0''(\fa)$ at the corresponding location(s) where $u_0'(\fa)$ attains its global extrema leads to finite-time blowup, while a vanishing one will result in global-in-time solutions (\cite{Sarria01}).
\end{remark}

\begin{remark}
Part 1 of Theorem \ref{lambdapos} showed that, for a particular choice of initial data $u_0(x)$ and parameter values $(\la,\ka)\in(0,1/2]\times\{-\la\}$, both $u_x(x,t)$ and $b_x(x,t)$ exist for all time along the Lagrangian trajectory $\ga(\fa,t)$ that starts at $\fa=\fa_0$. It is currently unknown if this is also the case for other choices of initial data or even for the same choice of data if $\fa\neq\fa_0$.   
\end{remark}

\section{A Comparison Blowup Criterion}
\label{sec:comparison}

In the previous section, we exploited a construction done in \cite{Sarria0} for a general solution of the $n-$dimensional incompressible Euler equation \eqref{Euler} to obtain finite-time blowup of solutions to \eqref{gmhd} for particular values of the parameters $\la $ and $\ka$. Basically, we showed that when we restrict the solution $u_x(x,t)$ of the system \eqref{gmhd} to the family of Lagrangian trajectories
$$\Gamma:=\{\langle\ga(\fa_0,t),t\rangle\,|\,\fa_0\in[0,1]\,\text{is a zero of order}\,\, m\geq 2\,\, \text{of}\,\, b_0(\fa)\},$$
then for $\ka=-\la$ and $\la\in\R\backslash\{0\}$, the evolution of $u_x(\ga(\fa_0,t),t)$ follows that of the Euler-associated function  $U_x(\ga(\fa_0,t),t)$, i.e., the solution of \eqref{pje}. Note that the parameters values $(\la,\ka)=(1,1)$ and $(\la,\ka)=(1/2,1)$, which correspond to solutions of the form \eqref{spf} of the $2D$ and $3D$ ideal MHD equations, respectively, are missing from Theorems \ref{lambdapos} and \ref{lambdaneg}. In this section, we slightly modify a standard Sturm comparison argument to obtain blowup criteria for solutions of the form \eqref{spf} of the 2D ideal MHD system \eqref{gmhd}, i.e. 
\begin{equation}
\label{2dmhd}
\begin{split}
&u_{xt}+uu_{xx}=u_x^2-b_x^2+bb_{xx}+2\|b_x\|_2^2-2\|u_x\|_2^2,
\\
&b_{t}+ub_{x}= bu_{x},
\end{split}
\end{equation}
in terms of the MHD and Euler pressure functions for smooth initial data. In particular, Theorem \ref{comparison} below compares the solution of \eqref{2dmhd} to that of the associated 2D incompressible Euler equation  
\begin{equation}
\label{2dpje}
U_{xt}+UU_{xx}=U_x^2-2\|U_x(\cdot,t)\|_2^2,
\end{equation}
which is obtained from \eqref{pje} by setting $\la=1$. Just as in the previous section, we use $U(x,0)=U_0(x)=x(1-x)$ as the 2D Euler-associated initial data, which means that we will be working under Dirichlet boundary conditions $U(0,t)=U(1,t)=0$. Since $U_0'(x)=1-2x$ attains its absolute maximum $M_0=1$ at $x=0$, we know that (\cite{Sarria01, sarria1}) 
\begin{equation}
\label{eblow}
\begin{split}
\lim_{t\nearrow T_e}U_x(0,t)=\lim_{t\nearrow T_e}\|U_x(\cdot,t)\|_2=\infty
\end{split}
\end{equation}
for 
\begin{equation}
\label{T_e}
\begin{split}
T_e&\equiv\lim_{\tau\nearrow1}\int_0^{\tau}{\left(\int_0^1{\frac{d\alpha}{1-\mu U_0^\prime(\alpha)}}\right)^{2}\,d\mu}
\\
&=\lim_{\tau\nearrow1}\int_0^{\tau}{\left[\frac{1}{2\tau}\ln\left(\frac{1+\tau}{1-\tau}\right)\right]^{2}\,d\mu}
\\
&=\frac{\pi^2}{6}.
\end{split}
\end{equation}
It is worth noting that $t=T_e$ and $x=0$ yield the \textit{earliest} blowup time for $U_x(x,t)$ and $\|U_x(\cdot,t)\|_2$. We now state and prove the main result of this section.  

\begin{theorem}
\label{comparison}
Consider the system \eqref{2dmhd} describing solutions of the form 
$$\bfu(x,y,t)=(u(x,t),-yu_x(x,t)),\qquad \bfb(x,y,t)=(b(x,t),-yb_x(x,t))$$
of the 2D ideal MHD equations \eqref{inviscidMHD} under the Dirichlet boundary condition \eqref{dbc} and smooth initial data $(u_0(x),b_0(x))$. Moreover, let $U_0(x)=x(1-x)$ be the initial data of the associated 2D Euler equation \eqref{2dpje} whose solution $U(x,t)$ satisfies the Dirichlet boundary condition
\begin{equation}
\begin{split}
\label{eulerdbc}
U(0,t)=U(1,t)=0
\end{split}
\end{equation}
and has finite interval of existence $\I_e=(0,T_e)$ for $T_e=\pi^2/6$. More precisely (\cite{Sarria01, sarria1}),  
\begin{equation}
\label{eblow2}
\begin{split}
\lim_{t\nearrow T_e}U_x(0,t)=\lim_{t\nearrow T_e}\|U_x(\cdot,t)\|_2=\infty.
\end{split}
\end{equation} 
Lastly, suppose $u_0(x)$ and $b_0(x)$ are such that   
\begin{equation}
\begin{split}
\label{initial}
b_0'(0)=0,\quad u_0'(0)\geq U_0'(0),\quad\text{and}\quad  \|U_0'(x)\|^2_2\geq\|u_0'(x)\|^2_2-\|b_0'(x)\|^2_2. 
\end{split}
\end{equation}
If 
\begin{equation}
\begin{split}
\label{ass}
\|U_x(\cdot,t)\|^2_2\geq\|u_x(\cdot,t)\|^2_2-\|b_x(\cdot,t)\|^2_2
\end{split}
\end{equation}
for all $t\in \I_e$, then
\begin{equation}
\begin{split}
\label{2dmhdblow}
\lim_{t\nearrow T_e}u_x(0,t)=\infty.
\end{split}
\end{equation}
\end{theorem}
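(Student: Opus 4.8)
The plan is to reduce the evolution of $u_x(0,t)$ and of the Euler quantity $U_x(0,t)$ to a pair of second-order linear ODEs along the fixed boundary trajectory, and then to compare them through the sign of a Wronskian rather than through classical oscillation theory. First I would note that under the Dirichlet condition \eqref{dbc} the trajectory through $\fa_0=0$ is stationary, $\ga(0,t)\equiv0$, so that $u_x(0,t)=u_x(\ga(0,t),t)$. Since $b(0,t)=0$ forces $b_0(0)=0$, and \eqref{initial} supplies $b_0'(0)=0$, Lemma \ref{lemma:order} gives $b(\ga(0,t),t)\equiv b_x(\ga(0,t),t)\equiv0$. Specializing \eqref{omegaode} to $\la=\ka=1$ and $\fa=0$ then annihilates the $bb_{xx}-b_x^2$ terms, leaving, for $\om(t)=\gafa(0,t)^{-1}$,
\begin{equation}
\om''(t)+I(t)\,\om(t)=0,\qquad I(t)=2\|b_x\|_2^2-2\|u_x\|_2^2,\qquad \om(0)=1,\ \om'(0)=-u_0'(0).
\end{equation}
The same computation applied to the Euler equation \eqref{2dpje} produces $y(t)=\beta_{\fa}(0,t)^{-1}$ solving $y''+I_e(t)y=0$ with $I_e(t)=-2\|U_x\|_2^2$, $y(0)=1$, and $y'(0)=-U_0'(0)=-1$; moreover \eqref{eblow2} translates into $y(t)\to0^+$ with $U_x(0,t)=-y'(t)/y(t)\to+\infty$ as $t\nearrow T_e$.

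Next I would introduce the Wronskian-type quantity $W(t)=\om(t)y'(t)-\om'(t)y(t)$ and differentiate, using the two ODEs, to obtain $W'(t)=(I(t)-I_e(t))\,\om(t)y(t)$. Here
\begin{equation}
I(t)-I_e(t)=2\left(\|U_x(\cdot,t)\|_2^2-\|u_x(\cdot,t)\|_2^2+\|b_x(\cdot,t)\|_2^2\right)\ge0
\end{equation}
by the running hypothesis \eqref{ass}, while $\om(t),y(t)>0$ on $\I_e$ because the respective Jacobians $\gafa(0,t)$ and $\beta_{\fa}(0,t)$ stay positive and finite there. Hence $W$ is nondecreasing on $\I_e$, and the initial data give $W(0)=u_0'(0)-U_0'(0)\ge0$ by \eqref{initial}, so $W(t)\ge0$ throughout $\I_e$.

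Finally, I would convert the sign of $W$ into the desired pointwise comparison. Since $u_x(0,t)=-\om'(t)/\om(t)$ and $U_x(0,t)=-y'(t)/y(t)$, a direct computation gives
\begin{equation}
u_x(0,t)-U_x(0,t)=\frac{\om(t)y'(t)-\om'(t)y(t)}{\om(t)y(t)}=\frac{W(t)}{\om(t)y(t)}\ge0,
\end{equation}
so $u_x(0,t)\ge U_x(0,t)$ on $\I_e$. Because $U_x(0,t)\to+\infty$ as $t\nearrow T_e$, the lower bound forces $u_x(0,t)\to+\infty$, which is \eqref{2dmhdblow}. The main obstacle, and the point where the argument departs from the textbook Sturm comparison, is that the comparison solution $y$ has no interior zero on $\I_e$; it vanishes only in the limit at the endpoint $T_e$, so the usual zero-interlacing conclusion is unavailable. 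Tracking the monotonicity and sign of $W$ sidesteps this by producing the pointwise inequality $u_x(0,t)\ge U_x(0,t)$ directly, provided one can guarantee $\om,y>0$ on all of $\I_e$. This last point rests on reading \eqref{ass} as carrying the assumption that the MHD solution persists on the entire Euler interval of existence, so that $\gafa(0,t)$ does not degenerate before $T_e$.
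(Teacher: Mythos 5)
Your proof is correct, and it reaches the paper's key intermediate inequality $u_x(0,t)\geq U_x(0,t)$ by a different, though ultimately equivalent, mechanism. The paper never passes to the Lagrangian linearization: it evaluates \eqref{2dmhd}-i) and \eqref{2dpje} directly at the stationary boundary point $x=0$, where the Dirichlet conditions kill the $uu_{xx}$ and $bb_{xx}$ terms, obtaining the Riccati equations $z'=z^2+2\|b_x\|_2^2-2\|u_x\|_2^2$ and $w'=w^2-2\|U_x\|_2^2$ for $z(t)=u_x(0,t)$ and $w(t)=U_x(0,t)$; it then sets $\sigma=z-w$, derives $\sigma'=(z+w)\sigma-2\h(t)$ with $\h(t)=\|u_x\|_2^2-\|b_x\|_2^2-\|U_x\|_2^2\leq0$ by \eqref{initial}-iii) and \eqref{ass}, and concludes $\sigma\geq0$ by Gronwall since $\sigma(0)=u_0'(0)-U_0'(0)\geq0$. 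Your route instead linearizes via $\om(t)=\gafa(0,t)^{-1}$ and $y(t)=\beta_{\fa}(0,t)^{-1}$ and tracks the Wronskian $W=\om y'-\om' y$; since $z=-\om'/\om$ and $w=-y'/y$, one has exactly $\sigma=W/(\om y)$, so your monotonicity of $W$ is the integrated form of the paper's Gronwall step --- the two arguments are conjugate under the standard Riccati substitution, and what your packaging buys is a clean explanation (which the paper only gestures at) of why classical Sturm zero-interlacing is unavailable: the comparison solution $y$ vanishes only in the limit at $T_e$. Two small remarks. First, to get $b_x(0,t)\equiv0$ the paper uses a one-line Eulerian computation --- differentiate \eqref{2dmhd}-ii) in $x$, set $x=0$, and use $u(0,t)=b(0,t)=0$ to obtain $b_{xt}(0,t)=0$ --- which avoids Lemma \ref{lemma:order} altogether; your invocation of that Lemma is sound in substance, but note it is stated for zeros of finite order $m$ (i.e., $b_0^{(m)}(\fa_0)\neq0$), whereas the conclusion you need follows from the transport argument inside its proof using only $b_0(0)=b_0'(0)=0$, so strictly you should cite that argument (or the direct computation) rather than the Lemma's statement. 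Second, you correctly observe that \eqref{ass}, being posited for all $t\in\I_e$, tacitly presupposes that the MHD solution persists on the whole Euler existence interval so that $\gafa(0,t)$ stays positive there; the paper's proof rests on the same implicit assumption (its Gronwall inequality is run on all of $[0,T_e)$), so making it explicit is a point in your favor rather than a gap.
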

\begin{proof}
Since $b_0'(0)=0$, differentiating \eqref{2dmhd}-ii) with respect to $x$, setting $x=0$, and using the Dirichlet boundary condition \eqref{dbc} yields $b_{xt}(0,t)=0$, so that 
\begin{equation}
\begin{split}
\label{bzero}
b_x(0,t)=b_0'(0)=0.
\end{split}
\end{equation}
Set $z(t)=u_x(0,t)$ and $w(t)=U_x(0,t)$. Then evaluating \eqref{2dmhd}-i) and \eqref{2dpje} at $x=0$ and using \eqref{dbc} and \eqref{eulerdbc} gives 
\begin{equation}
\begin{split}
\label{bdry}
z'(t)=z^2(t)+2\|b_x\|_2^2-2\|u_x\|_2^2\,,\quad w'(t)=w^2(t)-2\|U_x\|_2^2\,.
\end{split}
\end{equation}
Setting $\sigma(t)=z(t)-w(t)$ and subtracting the equations in \eqref{bdry} yields
\begin{equation}
\begin{split}
\label{sigma}
\sigma'=
(z+w)\sigma-2\h(t)
\end{split}
\end{equation}
for 
$$\h(t)=\|u_x\|_2^2-\|b_x\|_2^2-\|U_x\|_2^2.$$
Then, using \eqref{initial}-iii) and \eqref{ass} on \eqref{sigma} implies that
\begin{equation}
\begin{split}
\label{sigma2}
\sigma'\geq (z+w)\sigma
\end{split}
\end{equation}
for all $t\in\I_e$. An application of Gronwall's inequality on \eqref{sigma2} and \eqref{initial}-ii) now gives
$$\sigma(t)\geq \sigma(0)e^{\int_0^t(z(s)+w(s))\,ds}\geq 0$$
or, equivalently,
$$u_x(0,t)\geq U_x(0,t)$$
for all $0\leq t<T_e$. Letting $t\nearrow T_e$ in the above and using \eqref{eblow}-i) yields \eqref{2dmhdblow}. 
\end{proof}
The assumption \eqref{ass} in Theorem \ref{comparison} can be stated in terms of the corresponding nonlocal pressure terms as follows. For $0\leq x\leq 1$ and $y>0$, let $P^E(x,y,t)$ and $p^M(x,y,t)=P^M+\frac{1}{2}\bfb^2$ denote the scalar 2D Euler hydrodynamic pressure and the scalar 2D MHD total pressure, respectively, for $P^M$ the MHD hydrodynamic pressure. Then for solutions of the form \eqref{spf} and \eqref{Euleransatz}, it follows that the nonlocal terms 
$$J(t)=-2\|U_x\|_2^2<0$$
in the 2D Euler-associated equation \eqref{pje} (for $\la=1$) and 
$$I(t)=2\|b_x\|_2^2-2\|u_x\|_2^2$$ in \eqref{gmhd}-iv (for $\la=\ka=1$) satisfy
$$J(t)=\frac{1}{y}P_y^E<0,\qquad I(t)=\frac{1}{y}p^M_y=\frac{1}{y}P^M_y+b_x^2,$$ 
where, by \eqref{eblow2},
$$\lim_{t\nearrow T_e}P_y^E(t)=-\infty$$
for $T_e=\pi^2/6$. Consequently, the assumption \eqref{ass} is equivalent to 
$$\frac{1}{y}P^E_y\leq\frac{1}{y}P^M_y+b_x^2$$
for all $t\in(0,T_e)$, which doesn't necessarily imply blowup in the MHD pressure $I(t)$.

Additional results may be established using more sophisticated comparison results (see, e.g., \cite{swanson, preston1, preston2}), but Theorem \ref{comparison} is sufficient for our purposes of exploring the role played by the Euler and MHD pressure terms in the blowup of infinite energy solutions to \eqref{2dmhd} of the type \eqref{spf} when blowup of the infinite energy solutions of the Euler-associated equation \eqref{2dpje} is already known.

\section{Blowup Suppression for $(\la,\ka)=(-1/2,0)$}
\label{sec:suppressing}

All finite-time blowup results and blowup criteria presented so far require the existence of $\fa_0\in[0,1]$ such that, at least, the zeroth and first derivative of $b(x,0)=b_0(x)$ are zero there, i.e. 
\begin{align}
\label{vanish}
b_0(\fa_0)=b_0'(\fa_0)=0.
\end{align}
If \eqref{vanish} holds, then we have shown that $u_x\circ\ga(\fa_0,t)$ and some spatial derivative of order $m\geq2$ of $b(x,t)$ along $\ga(\fa_0,t)$ blowup in finite time for various values of the parameters $\la$ and $\ka$. In contrast, Lemma \ref{lemma:order} implies that if \eqref{vanish} holds, then $b(x,t)$ and $b_x(x,t)$ along $\ga(\fa_0,t)$ are identically zero for as long as solutions exist. If we interpret this in the MHD setting, this would imply that the magnetic field $\bfb(x,\bfx',t)=(b(x,t),-\frac{\bfx'}{n-1}b_x(x,t))$ in \eqref{spf}-ii) is identically  zero along this family of trajectories. This all appears to indicate a particular blowup mechanism for solutions of \eqref{gmhd} that favors the vanishing of $b$ and/or $b_x$.\footnote{Since setting $\bfb\equiv0$ reduces \eqref{inviscidMHD} to the incompressible Euler equations, one could argue that these blowup results are somewhat Euler in nature.} However, discussion of blowup only in the velocity gradient of the MHD equations and/or the role that a zero of the magnetic field may play in finite time blowup is not new. Yan (\cite{Yan}) showed that finite-time blowup of infinite energy, self-similar solutions of the 3D incompressible MHD equations \eqref{MHD} occurs in the velocity gradient but not the magnetic field. Moreover, numerical results by Gibbon and Ohkitani (\cite{gibbon3}) indicate finite-time blowup of infinite energy, periodic solutions of the 3D ideal MHD equations \eqref{inviscidMHD} of the form \eqref{2andahalf} in the velocity gradient along with a simultaneous, yet late-stage, blowup in the magnetic field. The authors, however, point out that the blowup in the magnetic field would be suppressed if the magnetic field is initially zero at the blowup location, which has precisely been our situation in all blowup results established so far in this paper. In Theorem \ref{global} below, we show that for values of $\la$ and $\ka$ for which there is finite-time blowup of $u_x(\ga(\fa_0,t),t)$ under the condition \eqref{vanish} (see part 2 of Theorem \ref{energyconserved}, or Theorem \ref{concave} ), removing the assumption $b_0'(\fa_0)=0$ results in the suppression of said blowup as long as the initial data satisfies a simple smallness condition. 
\begin{theorem}
\label{global}
Consider the system \eqref{gmhd} with smooth initial data $(u_0(x),b_0(x))$ and either the Dirichlet boundary condition \eqref{dbc} or the periodic boundary condition \eqref{pbc}. Suppose there is $\fa_0\in[0,1]$ such that $b_0(\fa_0)=0$ and $b_0'(\fa_0)>0$, and that the nontrivial initial data satisfies the smallness condition
\begin{equation}
\label{smallness}
\begin{split}
\|u_0'(x)\|_2^2+\|b_0'(x)\|_2^2\leq 1.
\end{split} 
\end{equation}
Then, for $(\la,\ka)=(-1/2,0)$, neither $u_x(\ga(\fa_0,t),t)$ nor $b_x(\ga(\fa_0,t),t)$ blows up in finite time. 
\end{theorem}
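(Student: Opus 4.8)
The plan is to specialize the second–order ODE \eqref{omegaode} for $\om(t)=\gafa(\fa_0,t)^{-\la}$ to the present parameters, recognize that it becomes an Ermakov--Pinney equation, and extract from its conserved energy a two–sided bound on the Jacobian that rules out blowup along $\ga(\fa_0,t)$. Fix the trajectory through $\fa_0$, where $b_0(\fa_0)=0$ but $\beta_0:=b_0'(\fa_0)>0$, and set $\la=-1/2,\ \ka=0$. Since $\ka=0$, equation \eqref{gmhd}-ii) is the pure transport equation $b_t+ub_x=0$; hence $b(\ga(\fa_0,t),t)\equiv b_0(\fa_0)=0$, while differentiating $b(\ga(\fa,t),t)=b_0(\fa)$ in $\fa$ and setting $\fa=\fa_0$ gives $b_x(\ga(\fa_0,t),t)=\beta_0\,\gafa(\fa_0,t)^{-1}=\beta_0\,\om^{-2}$, where $\om=\gafa^{1/2}$ is \eqref{omega} for $\la=-1/2$. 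Moreover Lemma \ref{lemma:energyconserved} applies, so $\e(t)\equiv\e_0$ and the nonlocal term reduces to the constant $I(t)=-\tfrac12\e_0$.

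First I would substitute these facts into \eqref{omegaode}. The $\ka bb_{xx}$ term vanishes, but---crucially---the $-\la b_x^2$ term survives because $\beta_0\neq0$; carrying it along yields
\begin{equation*}
\om''(t)+\frac{\e_0}{4}\,\om(t)=\frac{\beta_0^2}{4}\,\om(t)^{-3},\qquad \om(0)=1,\ \ \om'(0)=\tfrac12 u_0'(\fa_0),
\end{equation*}
the Ermakov--Pinney equation with the two positive parameters $\e_0/4$ and $\beta_0^2/4$ (the initial data coming from $\gafa(\fa_0,0)=1$ and \eqref{omega'}). Multiplying by $\om'$ and integrating produces the conserved energy
\begin{equation*}
4\,\om'(t)^2+\e_0\,\om(t)^2+\beta_0^2\,\om(t)^{-2}\equiv K:=\bigl(u_0'(\fa_0)\bigr)^2+\e_0+\beta_0^2 ,
\end{equation*}
valid for as long as the solution exists.

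The bounds are then immediate. Discarding the nonnegative terms in the energy one at a time gives $\beta_0^2/K\le \om(t)^2\le K/\e_0$ and $\om'(t)^2\le K/4$, so the Jacobian $\gafa(\fa_0,t)=\om^2$ is bounded above and away from zero. Consequently $b_x(\ga(\fa_0,t),t)=\beta_0\,\om^{-2}$ is bounded, and since $u_x(\ga(\fa_0,t),t)=2\om'/\om$ by \eqref{omega'}, it too is bounded (by $K/\beta_0$, say). Hence neither $u_x(\ga(\fa_0,t),t)$ nor $b_x(\ga(\fa_0,t),t)$ can diverge in finite time, which is the assertion of the theorem.

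The step I expect to be the main obstacle is identifying the stabilizing role of the surviving $b_x^2$ feedback, equivalently the Ermakov--Pinney structure. A one–sided Riccati estimate is hopeless: dropping the $\tfrac12 b_x^2$ term in the equation $z'=-\tfrac12 z^2+\tfrac12 b_x^2-\tfrac12\e_0$ for $z=u_x\circ\ga$ leaves $z'\ge-\tfrac12(z^2+\e_0)$, which permits $z\to-\infty$ in finite time, i.e. $\gafa\searrow0$. It is precisely the repulsive term $\beta_0^2\om^{-3}$, which grows without bound as $\om\searrow0$, that forbids the Jacobian from collapsing, and only the exact conserved energy renders this quantitative; this is the structural reason a nonvanishing $b_0'(\fa_0)$ suppresses the singularity present when $b_0'(\fa_0)=0$ (cf. part 2 of Theorem \ref{energyconserved}). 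Finally, I would regard the smallness hypothesis \eqref{smallness}, i.e. $\e_0\le1$, as a convenient sufficient condition that keeps $K$ and the resulting bounds uniformly controlled and guarantees persistence of the ambient solution, noting that the uniform trajectory bounds above in fact follow from the conservation law alone on the maximal interval of existence.
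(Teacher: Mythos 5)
Your proposal is correct, and it takes a genuinely different route from the paper's. The paper works directly with the planar system $z'=\tfrac12w^2-\tfrac12z^2-c^2$, $w'=-wz$ for $z=u_x\circ\ga(\fa_0,t)$, $w=b_x\circ\ga(\fa_0,t)$, and controls it with the Lyapunov function $W=w_0w+\tfrac{w_0}{w}\left(1+z^2\right)$, for which $W'=(1-\e_0)\tfrac{w_0}{w}z$; the smallness hypothesis \eqref{smallness} enters only through the sign of the factor $1-\e_0$, after which $z\leq 1+z^2$ and Gronwall give $W(t)\leq W(0)e^{(1-\e_0)t}$, hence exponential-in-time bounds. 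You instead pass to $\om=\gafa^{1/2}$ and observe that along $\ga(\fa_0,t)$ equation \eqref{omegaode} becomes the Ermakov--Pinney equation $\om''+\tfrac{\e_0}{4}\om=\tfrac{\beta_0^2}{4}\om^{-3}$; your substitutions all check out ($\ka bb_{xx}\equiv0$, $-\la b_x^2=\tfrac12b_x^2$, $b_x\circ\ga=\beta_0\gafa^{-1}=\beta_0\om^{-2}$ exactly as in the paper's own computation in part 2 of Theorem \ref{energyconserved}, and $I(t)\equiv-\tfrac12\e_0$ by Lemma \ref{lemma:energyconserved}), and the exact first integral $4(\om')^2+\e_0\om^2+\beta_0^2\om^{-2}\equiv K$ yields uniform-in-time two-sided bounds $\beta_0^2/K\leq\gafa\leq K/\e_0$ and hence uniform bounds on $z=2\om'/\om$ and $w=\beta_0\om^{-2}$. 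The two arguments are in fact equivalent under $w=\beta_0\om^{-2}$, $z=2\om'/\om$: your conserved energy equals $\gafa\left(z^2+w^2+\e_0\right)=\beta_0\bigl(w+\tfrac{\e_0+z^2}{w}\bigr)$, which is precisely the paper's $W$ with the constant $1$ replaced by $\e_0$ --- and with that replacement $W$ is exactly conserved rather than merely Gronwall-controlled. What your route buys: the smallness condition \eqref{smallness} is indeed not needed for the trajectory bounds (your closing remark is right), and you obtain uniform rather than exponentially growing control, quantifying structurally why the repulsive $\beta_0^2\om^{-3}$ term prevents collapse of the Jacobian; what the paper's route buys is a shorter, more elementary computation that avoids identifying the integrable structure. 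One housekeeping point: the upper bound $\gafa\leq K/\e_0$ requires $\e_0>0$, so you should record explicitly that $b_0'(\fa_0)>0$ together with continuity of $b_0'$ forces $\e_0\geq\|b_0'\|_2^2>0$.
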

\begin{proof}
Suppose $b_0(\fa_0)=0$ and $b_0'(\fa_0)>0$ for some $\fa_0\in[0,1]$. By Lemma \ref{lemma:order}, this implies that $b(\ga(\fa_0,t),t)\equiv0$ and $b_x(\ga(\fa_0,t),t)>0$ for as long as a solution exists. Moreover, since $\la=-1/2$ and $\ka=0$, Lemma \ref{lemma:energyconserved} implies that the nonlocal term $I(t)$ in \eqref{gmhd}-iv) is conserved, i.e.
\begin{equation}
\label{Iglobal}
\begin{split}
I(t)=-\frac{1}{2}\left(\|u_x\|_2^2+\|b_x\|_2^2\right)=-\frac{1}{2}\e(t)=-c^2
\end{split} 
\end{equation}
for $c^2=\frac{1}{2}\e_0\in\R^+$, $\e_0=\e(0)$ and $\e(t)=\|u_x\|_2^2+\|b_x\|_2^2$. Set 
$$z(t)=u_x(\ga(\fa_0,t),t),\quad w(t)=b_x(\ga(\fa_0,t),t).$$
Then we can write equations \eqref{gmhd}-i)-ii) along $\ga(\fa_0,t)$ as
\begin{equation}
\label{odd}
\begin{split}
z'(t)&=\frac{1}{2}w^2(t)-\frac{1}{2}z^2(t)-c^2,
\\
w'(t)&=-w(t)z(t),
\end{split} 
\end{equation}
where solving the second equation gives 
$$w(t)=b_0'(\fa_0)\cdot e^{-\int_0^tz(s)\,ds}.$$
Since $b_0'(\fa_0)>0$, this implies that $w(t)>0$. This is also a consequence of Lemma \ref{lemma:order} and our assumption that $b_0(\fa_0)=0$ and $b_0'(\fa_0)>0$. 

Define the strictly positive function (\cite{Aconstantin0, wunsch1})
$$W(t)=w_0w(t)+\frac{w_0}{w(t)}\left(1+z^2(t)\right)$$
for $w_0=w(0)=b_0'(\fa_0)>0$. Computing $W'$ with the help of \eqref{odd} gives, after simplification,
\begin{equation}
\label{W}
\begin{split}
W'&=w_0w'-w_0w^{-2}w'\left(1+z^2\right)+2w_0w^{-1}zz'
\\
&=(1-2c^2)\frac{w_0}{w}z
\\
&=(1-\e_0)\frac{w_0}{w}z
\end{split} 
\end{equation}\
where $1-\e_0\geq 0$ due to \eqref{smallness}. Since $z\leq 1+z^2$ and $w_0/w>0$, we see that 
\begin{equation}
\label{W1}
\begin{split}
\frac{w_0}{w}z\leq w_0w+\frac{w_0}{w}\left(1+z^2\right)=W,
\end{split} 
\end{equation}
so that
$$(1-\e_0)\frac{w_0}{w}z\leq(1-\e_0)W.$$
Using the above on \eqref{W} then gives
\begin{equation}
\label{W2}
\begin{split}
W'(t)\leq (1-\e_0)W(t)
\end{split} 
\end{equation}
for $1-\e_0\geq 0$. Invoking Gronwall's inequality on \eqref{W2} now yields
$$0<W(t)\leq W(0)e^{(1-\e_0)t}$$
for
$$W(t)=b_0'(\fa_0)b_x(\ga(\fa_0,t),t)+\frac{b_0'(\fa_0)}{b_x(\ga(\fa_0,t),t)}(1+(u_x(\ga(\fa_0,t),t))^2).$$
Consequently, neither $u_x(\ga(\fa_0,t),t)$ nor $b_x(\ga(\fa_0,t),t)$ blows up in finite-time.
\end{proof}

\begin{remark}
Suppose $u_0$ and $b_0$ are both odd at $x=1/2$. Then, the structure of \eqref{gmhd} implies that if $u$ and $b$ are solutions with this initial data, then they will retain this property for as long as solutions exist. In particular, this means that $u, b, u_{xx}$ and $b_{xx}$ vanish at $x=1/2$ since they are all odd, while $u_{x}$ and $b_x$ are even. The result in Theorem \ref{global} still holds if one replaces the existence of $\fa_0$ with oddness at $x=1/2$.  
\end{remark}

\section{Global Existence for $(\la,\ka)=(0,0)$}
\label{sec:zero}
Our last result is that solutions of \eqref{gmhd}-\eqref{dbc} (or \eqref{pbc}) exist for all time if $\la=\ka=0$.
\begin{theorem}
\label{global2}
Consider the system \eqref{gmhd} with smooth initial data $(u_0(x),b_0(x))$ and either the Dirichlet boundary condition \eqref{dbc} or the periodic boundary condition \eqref{pbc}. If $\la=\ka=0$, then solutions exist globally in time. In particular, for all $t\geq 0$,  the Jacobian \eqref{jacobian} satisfies
\begin{equation}
\label{zerojacobian}
\begin{split}
\gafa(\fa,t)=e^{tu_0'(\fa)}\left(\int_0^1e^{tu_0'(\fa)}\,d\fa\right)^{-1},
\end{split} 
\end{equation}
while 
\begin{equation}
\label{zerouxbx}
\begin{split}
b(\ga(\fa,t),t)=b_0(\fa)\qquad 0\leq u_0'(\fa)-u_x(\ga(\fa,t),t)\leq \int_0^1u_0'(\fa)e^{tu_0'(\fa)}\,d\fa. 
\end{split} 
\end{equation}
\end{theorem}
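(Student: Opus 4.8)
The plan is to exploit the drastic simplification of \eqref{gmhd} when $\la=\ka=0$. First I would substitute these values: the nonlocal term collapses to $I(t)=-\|u_x(\cdot,t)\|_2^2$, so that \eqref{gmhd}-i) becomes $u_{xt}+uu_{xx}=-\|u_x\|_2^2$, while \eqref{gmhd}-ii) becomes the homogeneous transport equation $b_t+ub_x=0$. The latter integrates immediately along the trajectories \eqref{lagrangian} to give $b(\ga(\fa,t),t)=b_0(\fa)$, which is the first assertion in \eqref{zerouxbx}.

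The key observation is that the evolution of $u_x$ along $\ga$ decouples from $\fa$. Writing $z(\fa,t)=u_x(\ga(\fa,t),t)$ and combining the chain rule with \eqref{gmhd}-i) and \eqref{lagrangian}, I would obtain $z_t=(u_{xt}+uu_{xx})\circ\ga=-\|u_x(\cdot,t)\|_2^2$, whose right-hand side is independent of $\fa$. Integrating in $t$ with $z(\fa,0)=u_0'(\fa)$ yields $u_x(\ga(\fa,t),t)=u_0'(\fa)-F(t)$, where $F(t)=\int_0^t\|u_x(\cdot,s)\|_2^2\,ds\geq0$; this is exactly the lower bound $0\leq u_0'(\fa)-u_x(\ga(\fa,t),t)$. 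Substituting into \eqref{jacobian} factors the Jacobian as $\gafa(\fa,t)=e^{tu_0'(\fa)}\,e^{-\int_0^tF(s)\,ds}$, with an $\fa$-independent second factor. To pin that factor down, I would invoke the conservation-of-mean property $\int_0^1\gafa(\fa,t)\,d\fa\equiv1$ (valid for $\gafa$ as noted after \eqref{4}), which forces $e^{-\int_0^tF(s)\,ds}=\left(\int_0^1e^{tu_0'(\fa)}\,d\fa\right)^{-1}$ and hence \eqref{zerojacobian}.

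For the remaining upper bound I would set $\Phi(t)=\int_0^1e^{tu_0'(\fa)}\,d\fa$, so that the mean-conservation identity reads $\Phi(t)=e^{\int_0^tF(s)\,ds}$ and therefore $F(t)=\Phi'(t)/\Phi(t)$. By Jensen's inequality and convexity of the exponential, $\Phi(t)\geq e^{t\int_0^1u_0'(\fa)\,d\fa}$; under either \eqref{dbc} or \eqref{pbc} the mean of $u_0'$ vanishes, giving $\Phi(t)\geq1$. Since $F\geq0$ and $\Phi>0$ force $\Phi'=F\Phi\geq0$, I would conclude $F(t)=\Phi'(t)/\Phi(t)\leq\Phi'(t)=\int_0^1u_0'(\fa)e^{tu_0'(\fa)}\,d\fa$, which is the desired upper bound in \eqref{zerouxbx}.

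Finally, for global existence I would argue that formula \eqref{zerojacobian} keeps $\gafa(\fa,t)$ strictly positive and finite for every $t\geq0$, since $u_0'$ is bounded on $[0,1]$ and $\Phi(t)$ is positive and finite. Thus $\ga(\cdot,t)$ remains a diffeomorphism and $u_x=u_0'-F(t)$ stays finite on any bounded time interval, ruling out the vanishing-Jacobian or velocity-gradient blowup that drove the singularities in the earlier sections. This theorem is essentially computational, so the only point requiring genuine care is logical rather than analytic: all the formulas are derived on the maximal interval of existence, hence I would phrase a standard continuation argument showing that the boundedness of these explicit quantities prevents the maximal existence time from being finite. The mildest obstacle I anticipate is confirming that the conservation-of-mean identity applies in this setting and that $u_0'$ has zero mean uniformly under both boundary conditions, but both follow directly from the structure already established in Section \ref{prelim}.
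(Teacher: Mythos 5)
Your proof is correct and follows essentially the same route as the paper: both exploit the $\fa$-independence of the forcing $I(t)=-\|u_x(\cdot,t)\|_2^2$ to factor $\gafa(\fa,t)$ as $e^{tu_0'(\fa)}$ times an $\fa$-independent function of $t$, which is then pinned down by the mean-one property $\int_0^1\gafa(\fa,t)\,d\fa\equiv1$. The only cosmetic differences are that you integrate the first-order equation for $u_x\circ\ga$ directly (the paper instead integrates $\partial_t^2\left(\ln\gafa\right)=I(t)$ twice and recovers $u_x\circ\ga$ by differentiating the resulting Jacobian formula), and you obtain $\int_0^1 e^{tu_0'(\fa)}\,d\fa\geq1$ via Jensen's inequality and the zero mean of $u_0'$, where the paper deduces it from the sign of $\partial_t\ln\left(\int_0^1 e^{tu_0'(\fa)}\,d\fa\right)$.
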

\begin{proof}
First, note that setting $\la=\ka=0$ in \eqref{gmhd}-i)-ii) implies that
$$\partial_t(u_x(\ga(\fa,t),t))\leq0,\qquad \partial_t(b(\ga(\fa,t),t))=0$$
or, after integrating,
\begin{equation}
\label{solving}
\begin{split}
u_x(\ga(\fa,t),t)\leq u_0'(\fa)\qquad b(\ga(\fa,t),t)=b_0(\fa)
\end{split} 
\end{equation}
Next, setting $\lambda=\ka=0$ in \eqref{concavity0} yields 
$$\partial^2_t\left(\ln\gamma_\alpha\right)=I(t)$$
for $I(t)=-\int_0^1{u_x^2\,dx}$. Then, integrating the above twice in time and using $\gamma_{\fa t}=(u_x(\gamma,t))\cdot\gamma_\alpha$ and $\gamma_\alpha(\alpha,0)=1,$ we obtain
\begin{equation}
\label{eq:02}
\begin{split}
\gamma_\alpha(\alpha,t)=e^{tu_0^\prime(\alpha)}e^{\int_0^t{(t-s)I(s)ds}}.
\end{split}
\end{equation}
Since $\gamma_\alpha$ has mean one in $[0,1],$ we integrate (\ref{eq:02}) in $\alpha$ to find
\begin{equation}
\label{eq:03}
\begin{split}
e^{\int_0^t{(t-s)I(s)ds}}=\left(\int_0^1{e^{tu_0^\prime(\alpha)}d\alpha}\right)^{-1}.
\end{split}
\end{equation}
Combining this with (\ref{eq:02}) gives 
\begin{equation}
\label{eq:04}
\begin{split}
\gamma_{\alpha}(\alpha,t)=e^{tu_0^\prime(\alpha)}\left(\int_0^1{e^{tu_0^\prime(\alpha)}d\alpha}\right)^{-1},
\end{split}
\end{equation}
a bounded expression for $\gamma_\alpha$ which we differentiate with respect to $t$ to get
\begin{equation}
\label{eq:05}
\begin{split}
u_x(\gamma(\alpha,t),t)=u_0^\prime(\alpha)-\frac{\int_0^1{u_0^\prime(\alpha)e^{tu_0^\prime(\alpha)}d\alpha}}{\int_0^1{e^{tu_0^\prime(\alpha)}d\alpha}}.
\end{split}
\end{equation}
Using \eqref{eq:04} along with successive differentiation with respect to $\fa$ of \eqref{solving}-ii) yields bounded expressions for all spatial derivatives of $b\circ\ga$.

Lastly,  combining \eqref{solving}-i) with \eqref{eq:05} implies that
$$\partial_t\ln\left(\int_0^1 e^{tu_0'(\fa)}\,d\fa\right)\geq 0,$$
so that $\int_0^1e^{tu_0'(\fa)}\,d\fa\geq 1$. Then, \eqref{solving}-i) and \eqref{eq:05} give 
$$\int_0^1u_0'(\fa)e^{tu_0'(\fa)}\,d\fa=(u_0'(\fa)-u_x(\ga(\fa,t),t))\int_0^1e^{tu_0'(\fa)}\,d\fa\geq u_0'(\fa)-u_x(\ga(\fa,t),t)\geq 0,$$
which is \eqref{zerouxbx}-ii).
\end{proof}
It would be interesting to see if finite-time blowup of solutions to \eqref{gmhd} is possible without the assumption \eqref{vanish} for parameter values other than $(\la,\ka)=(-1/2,0)$. Moreover, global regularity for either $\ka>-\la$ for $\la\in\R$, or $\ka<-\la$ for $\la\in(-\infty,-1)\cup[0,\infty)$, has yet to be determined. Finally, in upcoming work we will be studying the effects that damping may have on the regularity of solutions of \eqref{gmhd}.

\makeatletter \renewcommand{\@biblabel}[1]{\hfill#1.}\makeatother

\end{document}